\newtheorem{thm}{Theorem}
\newtheorem{prop}[thm]{Proposition}
\newtheorem{cor}[thm]{Corollary}
\newtheorem{dfn}{Definition}
\newtheorem{example}{Example}
\newcommand{\Lfin}{L_{\mathrm{fin}}}
\newcommand{\Pf}{\mathcal{P}_{\hspace{-0.2mm}\mathrm{fin}}   }
\newcommand{\Mf}{\mathcal{M}_{\hspace{-0.2mm}\mathrm{fin}}   }
\renewcommand{\Re}{\mathbb{R}}
\title{Lattice Diversities}
\author[$\dagger$]{David Bryant}
\author[$\star$]{Ra\'ul Felipe}
\author[$\ddag$]{Mauricio Toledo-Acosta}
\author[$\odot$]{Paul Tupper}
\affil[$\dagger$]
{Department of Mathematics and Statistics, University of Otago, Dunedin, New Zealand. Email~\texttt{david.bryant@otago.ac.nz}}
\affil[$\star$]{CIMAT, Jalisco S/N, Valenciana. CP: 36240 Guanajuato, Gto, M\'exico. Email~\texttt{raulf@cimat.mx}}
\affil[$\ddag$]{Centro de Investigaci\'on en Ciencias, Universidad Aut\'onoma del Estado de Morelos, Cuernavaca, M\'exico. Email~\texttt{mauricio.toledo@uaem.mx}}
\affil[$\odot$]{Department of Mathematics, Simon Fraser University, Burnaby, Canada. Email~\texttt{pft@sfu.ca}}
\begin{document}

\maketitle

\begin{abstract}
Diversities are a generalization of metric spaces, where instead of the non-negative function being defined on pairs of points, it is defined on arbitrary finite sets of points. Diversities have a well-developed theory.
This includes the concept of a diversity tight span that extends the metric tight span in a natural way. 
Here we explore the generalization of diversities to lattices. Instead of defining diversities on finite subsets of a set we consider diversities defined on members of an arbitrary lattice (with a 0). 
We show that many of the basic properties of diversities continue to hold. However, the natural map from a lattice diversity to its tight span is not a lattice homomorphism, preventing the development of a complete tight span theory as in the metric and diversity cases.
\end{abstract}


\section{Introduction}

Let $\Pf(X)$ be the set of all finite subsets of a set $X$, and $\Re_{\geq 0}$ be the non-negative reals.
A diversity is a set $X$ with a function $\delta \colon \Pf(X)\rightarrow \Re_{\geq 0}$ satisfying the axioms
\begin{enumerate}
\item $\delta(A)=0$ if and only if $|A|\leq 1$.
\item For all $A,B,C \in \Pf(X)$ such that $B\neq\emptyset$
$$\delta(A\cup C)\leq \delta(A\cup B)+\delta(B\cup C).$$
\end{enumerate}
Diversities were introduced in \citep{Bryant2012} as a generalization of metric spaces. Indeed, if we let $d(a,b)=\delta(\{a,b\})$ for all $a,b \in X$, then $(X,d)$ is metric space.
Since their introduction a large body of theory has developed around the basic definition: connections to hypergraphs and combinatorial optimization \citep{BryantTupper2014}, hyperconvex geometry \citep{Piatek2014a,Espinola14}, and model theory \citep{bryant2020}, among others. 
In what follows we refer to diversities as defined in \citep{Bryant2012} to be \emph{classical diversities} in order to distinguish them from the concept of lattice diversities developed here.

The second condition in the definition of a classical diversity is known as the \emph{triangle inequality} and is equivalent to the following two conditions:
\begin{itemize}
\item If $A \subseteq B$, $\delta(A) \leq \delta(B)$, (\emph{monotonicity}) and 
\item If $A \cap B \neq \emptyset$ then $\delta(A \cup B) \leq \delta(A) + \delta(B)$, (\emph{subadditivity on intersecting sets})
\end{itemize}
for all $A,B \in \Pf(X)$.

One of the most attractive aspects of  these developments is the tight span theory for classical diversities, developed in \cite{Bryant2012} and generalizing the tight span theory for metric spaces developed in \cite{dress1984,isbell1964}. Given a classical diversity $(X,\delta)$, consider $P_X$ the set of all functions $f \colon \Pf(X) \rightarrow \mathbb{R}^+$ satisfying
\[
\sum_{i=1}^n f(A_i) \geq \delta( \cup_{i=1}^n A_i)
\]
for all finite subsets $A_i, i=1,\ldots,n$ of $X$.
We define the tight span $T_X$ of $X$ to be the set of minimal elements of $P_X$ under the standard ordering. It turns out that a function $\delta_T$ on $\Pf(T_X)$ can be defined so that 
$(T_X,\delta_T)$ is itself a classical diversity. Furthermore, there is a natural classical diversity embedding of $X$  into $T_X$, and $T_X$ is the minimal injective object that $X$ can be embedded in. 

The many developments of the theory of classical diversities motivates seeking generalizations.
Diversity functions are defined on the set of finite subsets of $X$, and the set of finite subsets of a set form a lattice. A natural question is to ask if the theory can be extended to more general lattices.

Here we see how far we can go by generalizing the axioms of classical diversities to arbitrary lattices containing a $0$. In Section~\ref{sec:basicdefn} we quickly review the necessary background of posets and lattices and give the basic definitions and properties of lattice diversities. In Section \ref{sec:always_diversities} we show 
 that a diversity function can always be defined on any lattice with 0 by letting $\delta$ be 0 on 0 and any atoms and 1 on all other elements.
We also show that, if the lattice satisfies some additional conditions, we can define  more interesting lattice diversities in a couple of ways: i) by using the length of finite chains, and ii) by using a valuation defined on the lattice.  In Section \ref{sec:distributive} we explore the case of lattice diversities for distributive lattices, and show that in the finite case these are exactly the restrictions of classical diversities to algebras of sets.
In Section \ref{sec:tight_span}, we define the tight span of a lattice diversity and compute it for two examples. We show that, unlike  the metric and diversity cases, there is in general no natural embedding of a lattice diversity into its tight span. Finally in Section~\ref{sec:discussion} we discuss some prospects for developing a more satisfying tight span theory for lattice diversities.

\section{Basic Definitions and Properties}\label{sec:basicdefn}

In order to generalize the theory of diversities to more general lattices we need to decide what will be the analogue of the empty set $\emptyset$ and singleton sets. Here we review the relevant concepts in order theory. See \cite{davey2002introduction} for a comprehensive introduction.

First we consider a partially ordered set (poset) $(P, \leq)$. That is, $P\neq\emptyset$ is any set and $\leq$ satisfies for all $a,b,c \in P$
\begin{enumerate}\setlength{\itemsep}{-0.0in}
\item $a \leq a$
\item $a \leq b$ and $b \leq c$ implies $a \leq c$,
\item $a \leq b$ and $b \leq a$ implies $a=b$.
\end{enumerate}
If there is an element $x \in P$ such that $x \leq y$ for all $y \in P$, we let $x=0$ and say that $P$ has a $0$. 

A lattice is a poset $(L,\leq)$ in which every two elements have a unique supremum (also called a least upper bound) and a unique infimum (also called a greatest lower bound). There are two binary operations defined on $L$ called meet (designated by $\wedge$) and join (designated by $\vee$). They are given by the supremum and infimum respectively. For $B \subseteq L$ we let $\bigvee B$ denote $\bigvee_{b \in B} b$; likewise for $\bigwedge B$. Lattices, being posets, may also have a $0$ element, which satisfies $0 \wedge x=0$ and $0 \vee x=x$ for all $x \in L$.

Given a poset $P$, not necessarily a lattice, we say $X \subseteq P$ is a \emph{lower set} of $P$, and write $X \in \mathcal{O}(P)$, if $x \in X$ and $y \leq x$ implies $y \in X$. $\mathcal{O}(P)$ is a distributive lattice with a $0$.
Given an element $x\in P$ we denote by $\downarrow x$ the set of all $y$ such that $y \leq x$; given a set $A \subseteq P$ we denote by $\downarrow A$ the set of all $y$ such that $y \in \downarrow a$ for some $a \in A$.  Both $\downarrow a \in \mathcal{O}(P)$ for $a \in P$ and $\downarrow A \in \mathcal{O}(P)$ for $A \subseteq P$.

Let $L$ be a lattice. An element $a\in L$ is said to be \emph{join irreducible} if $a \neq 0$ and  $a=b\vee c$ implies $a=b$ or $a=c$. We denote by $J(L)$ the set of all join irreducible elements. For $a,b\in L$, we say that $b$ covers $a$ (and that $b$ is an upper cover of $a$)
if $a < b$ and $\{c \mid a<c<b\}=\emptyset$. If $L$ has a least element $0$, then the upper covers of $0$ are called the atoms of $L$, and we denote them by $A(L)$.

In the lattice of finite subsets of a set $X$, $0$ is the empty set, and $A(L)$ is the set of singletons. This observation motivates the following definition. 
\begin{dfn} \label{defn:basic} Let $L$ be a lattice with $0$ and $\delta \colon L \rightarrow \mathbb{R}^+$. $(L,\delta)$ is a \emph{lattice diversity} if
\begin{enumerate}
\item $\delta(a)=0$ if and only if $a \in A(L)$ or $a=0$.
\item $a \leq b$ implies $\delta(a) \leq \delta(b)$ (monotonicity), 
\item $a \wedge b \neq 0$ implies $\delta(a \vee b) \leq \delta(a)+ \delta(b)$ (subadditivity on non-disjoint elements)
\end{enumerate}
\end{dfn}

We remark that the last two conditions of the definition when restricted to the lattice $\Pf(X)$ of a set $X$ reduce to monotonicity ($A \subseteq B$ implies $\delta(A)\leq \delta(B)$) and subadditivity on intersecting sets ($A \cap B \neq \emptyset$ implies $\delta(A \cup B) \leq \delta(A) + \delta(B)$). In the original theory of diversities these two conditions are equivalent to the triangle inequality,  but for general lattices
this equivalence does not hold. An example is given by a four-point lattice with points $0 < a_1 < a_2 < a_3$, and $\delta(0)=\delta(a_1)=0$, $\delta(a_2)=2$ and $\delta(a_3)=1$, which satisfies the triangle inequality but not monotonicity. We required montonicity in order to get a more usable theory.




The following are examples of lattice diversities:

\begin{example}
Observe that if we take the lattice $L$ to be $\Pf(X)$, the lattice of finite subsets of a set $X$, then we recover the original definition of classical diversities in \citep{Bryant2012}. In this case, the role of $X$ itself is played by $A(L)=J(L)$. Likewise, if we take any classical diversity $(X,\delta)$, and we restrict $\delta$ to a sublattice of $\Pf(X)$ we obtain a lattice diversity.
\end{example}

\begin{example}
Let $L$ be any totally ordered set with a least element $0$.  Let $\delta \colon L \rightarrow \mathbb{R}^+$ satisfy $\delta(a)=0$ if and only if $a \in \{0\} \cup A(L)$ and $a \leq b$ implies $\delta(a) \leq \delta(b)$. Then $(L,\delta)$ is a lattice diversity, since subadditivity on non-disjoint elements follows immediately. This example works for $L=\mathbb{N}$ with $A(L)=\{1\}$, as well as for $L=\mathbb{R}^+$, where $A(L)$ is empty.
\end{example}

\begin{example}Let $\Mf(X)$ denote the set of finite multisets of a set $X$. Multisets are subsets that allow the inclusion of multiple copies of elements of a set $X\neq \emptyset$. For $x_1,...,x_n\in X$  and $k_1,k_2,...,k_n \in \mathbb{Z}_{\geq 0}$, we denote by $\{x_1^{(k_1)},...,x_n^{(k_n)}\}$  an element of $\Mf(X)$ made up of $k_j$ copies of $x_j$. The join of multisets is obtained by taking the maximum number of each element in the two multisets, and the meet is obtained by taking the minimum.

We give one example of how to define a diversity on $\Mf(X)$. Let $d$ be a metric on $X$. For each $x \in X$ let the function $f_x \colon \mathbb{N} \rightarrow \mathbb{R}^+$ be monotone and  satisfy $f_x(n)=0$ if and only if $n=0$ or $1$. Then let
\[
\delta(\{x_1^{(k_1)},...,x_n^{(k_n)}\}) = \max \left( \max_{i,j} d(x_i,x_j), \max_i f_{x_i}(k_i) \right).
\]
Then $(\Mf(X),\delta)$ is a lattice diversity, and $\delta$ is the minimal diversity function that dominates the diameter diversity of $(X,d)$ and the single-chain lattice diversities given by the $f_{x_i}$.
\end{example}

\begin{example} \label{ex:divis} Let  $L$ be the set of all natural numbers, where  the meet operation $\wedge$ is the greatest common divisor and the join operation $\vee$ is the lowest common multiple. In this case, $n \leq m$ means that $n$ divides $m$. The least element of $L$ is $1$ and $A(L)$ is the set of prime numbers. We call this lattice the \emph{divisibility lattice}.  This example of a lattice is equivalent to the previous example, where the base set $X$ is the set of primes.


As an example of a diversity function on $L$, we derive one from the prime Omega function $\Omega(n)$ which counts the total number of prime factors of $n$, counting multiplicity (see \citep{hardy1979introduction}). For example,  $\Omega(1)=0$, $\Omega(7)=1$, $\Omega(6)=2$, and  $\Omega(16)=4$. Consider the function 
	$$\delta(n)=\begin{cases}
	\Omega(n), & n\text{ is not prime,}\\
	0, & n\text{ otherwise}.
	\end{cases}	
	$$  
It is straightforward to see that $\delta=0$ on $A(L)\cup\{1\}$ and that $\delta$ is monotone. The fact that $\delta$ is subadditive is a consequence of $\Omega$ being completely additive, i.e.\ $\Omega(mn)=\Omega(m) + \Omega(n)$. So $\delta$ is a diversity on $L$.\\
\end{example}

Now we can establish some basic properties.

\begin{prop}\label{prop:general_subadditivity}
Let $(L,\delta)$ be a lattice diversity and let $x_1,...,x_n\in L$ such that $x_1\wedge ... \wedge x_n\neq 0$. Then
	$$\delta(x_1\vee...\vee x_n)\leq \delta(x_1) +...+\delta(x_n).$$
\end{prop}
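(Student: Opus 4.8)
The plan is to argue by induction on $n$. The case $n=1$ is immediate ($\delta(x_1)\leq\delta(x_1)$), and the case $n=2$ is precisely the third axiom of Definition~\ref{defn:basic}, applicable because $x_1\wedge x_2\neq 0$ by hypothesis. So assume $n\geq 3$ and that the statement holds for any $n-1$ elements whose meet is nonzero.

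Let $x_1,\ldots,x_n\in L$ with $m:=x_1\wedge\cdots\wedge x_n\neq 0$. The one idea needed is that $m$ sits below everything relevant. Since $m\leq x_i$ for every $i$, we have $m\leq x_1\wedge\cdots\wedge x_{n-1}$; and since also $m\leq x_1\vee\cdots\vee x_{n-1}$ and $m\leq x_n$, we get $m\leq(x_1\vee\cdots\vee x_{n-1})\wedge x_n$. Because $0$ is the least element of $L$, any element lying above $m$ that were equal to $0$ would force $m=0$, a contradiction; hence both $x_1\wedge\cdots\wedge x_{n-1}\neq 0$ and $(x_1\vee\cdots\vee x_{n-1})\wedge x_n\neq 0$.

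Now apply the inductive hypothesis to $x_1,\ldots,x_{n-1}$ (valid by the first of these inequalities) to obtain $\delta(x_1\vee\cdots\vee x_{n-1})\leq\delta(x_1)+\cdots+\delta(x_{n-1})$. Setting $a=x_1\vee\cdots\vee x_{n-1}$ and $b=x_n$, the second inequality gives $a\wedge b\neq 0$, so axiom~3 yields $\delta(x_1\vee\cdots\vee x_n)=\delta(a\vee b)\leq\delta(a)+\delta(b)$; combining the two estimates completes the induction.

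I do not anticipate a genuine obstacle here: the only subtlety is that the single assumption $x_1\wedge\cdots\wedge x_n\neq 0$ has to be used twice per step — once to keep the smaller meet $x_1\wedge\cdots\wedge x_{n-1}$ nonzero for the inductive call, and once to make the sub-join $x_1\vee\cdots\vee x_{n-1}$ non-disjoint from $x_n$ for the final application of axiom~3 — and both facts drop out of the monotonicity of $\wedge$ and $\vee$ together with the remark that $m\leq 0$ implies $m=0$. Notably, the monotonicity axiom of $\delta$ itself is not required for this argument.
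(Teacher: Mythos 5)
Your proof is correct and follows essentially the same route as the paper's: induction on $n$, using that the full meet $c=x_1\wedge\cdots\wedge x_n\neq 0$ lies below $(x_1\vee\cdots\vee x_{n-1})\wedge x_n$ so that axiom~3 applies to $a=x_1\vee\cdots\vee x_{n-1}$ and $b=x_n$. You are in fact slightly more careful than the paper in spelling out why the inductive hypothesis is applicable (namely $c\leq x_1\wedge\cdots\wedge x_{n-1}$, so that meet is also nonzero), which the paper leaves implicit.
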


\begin{proof}
We proceed by induction. By the subadditivity of $\delta$  on non-disjoint elements the statement is true for $n=2$. Assume it is true for $n-1$ and that  $c :=x_1\wedge ... \wedge x_n\neq 0$.   We know $c \leq x_i$ for all $i$ and so in particuluar $c\leq x_n$ and $c \leq x_1 \vee \ldots \vee x_{n-1}$. Hence
\[
0 < c \leq (x_1 \vee \ldots \vee x_{n-1}) \wedge x_n.
\]
So we can use the inductive hypothesis and apply subadditivity again
to get
\[
\delta((x_1\vee...\vee x_{n-1}) \vee x_n) \leq \delta(x_1 \vee \ldots \vee x_{n-1}) + \delta(x_n) \leq \sum_{i=1}^{n-1} \delta(x_i) + \delta(x_n)
\]
as required.
 \end{proof}

\begin{thm}
For a lattice diversity $(L,\delta)$, the diversity function $\delta$ satisfies the \emph{triangle inequality}: 
	$$\delta(a \vee c) \leq \delta(a \vee b) + \delta(b \vee c),$$
for all $a,b,c \in L$ with $b \neq 0$. 
\end{thm}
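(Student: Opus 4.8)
The plan is to deduce the triangle inequality directly from monotonicity and subadditivity on non-disjoint elements, with no new induction needed. The key idea is that $a \vee c$ is dominated by $a \vee b \vee c$, and the latter splits as the join of the two ``sides'' $a \vee b$ and $b \vee c$, which both contain $b$ and hence meet above $0$.

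First I would note that $a \vee c \leq a \vee b \vee c$, so monotonicity (axiom 2 of Definition~\ref{defn:basic}) gives $\delta(a \vee c) \leq \delta(a \vee b \vee c)$. Next, set $x := a \vee b$ and $y := b \vee c$. Associativity, commutativity and idempotency of the join give $x \vee y = a \vee b \vee c$. Since $b \leq x$ and $b \leq y$, the element $b$ is a lower bound of $\{x,y\}$, hence $b \leq x \wedge y$. Because $0$ is the least element of $L$ and $b \neq 0$, we have $0 < b \leq x \wedge y$, so $x \wedge y \neq 0$. Now subadditivity on non-disjoint elements (axiom 3) applies to $x$ and $y$, yielding
$$\delta(a \vee b \vee c) = \delta(x \vee y) \leq \delta(x) + \delta(y) = \delta(a \vee b) + \delta(b \vee c).$$
Chaining this with the inequality from monotonicity gives $\delta(a \vee c) \leq \delta(a \vee b) + \delta(b \vee c)$, as required.

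There is essentially no hard step here: the only point requiring care is verifying $x \wedge y \neq 0$, and this is exactly where the hypothesis $b \neq 0$ is used. This is the lattice analogue of the classical argument, in which $B \neq \emptyset$ guarantees $(A \cup B) \cap (B \cup C) \supseteq B \neq \emptyset$. Note also that a single application of axiom 3 suffices, so Proposition~\ref{prop:general_subadditivity} is not needed for this proof.
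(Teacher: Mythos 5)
Your proof is correct and is essentially identical to the paper's argument: bound $\delta(a\vee c)$ by $\delta(a\vee b\vee c)$ via monotonicity, rewrite $a\vee b\vee c=(a\vee b)\vee(b\vee c)$, and apply subadditivity since $(a\vee b)\wedge(b\vee c)\geq b\neq 0$. No differences worth noting.
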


\begin{proof}
Let $a,b,c\in L$, with $b\neq 0$. Then, by montonicity
	\begin{align*}
	\delta(a\vee c) &\leq \delta(a\vee b \vee c) = \delta\left( (a\vee b) \vee (b\vee c) \right)\\
		&\leq \delta(a\vee b) + \delta(b\vee c),
	\end{align*}
	since $(a \vee b) \wedge (b \vee c) \geq b \neq 0$.
  \end{proof}

\begin{thm}
For a lattice diversity $(L,\delta)$, define the function $d_\delta \colon A(L) \times A(L) \rightarrow \mathbb{R}^+$ by
\[
d_\delta(a,b) = \delta( a \vee b)
\]
then $(A(L),d_\delta)$ is a metric space.
\end{thm}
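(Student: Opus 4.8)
The plan is to verify the four metric axioms directly, leaning on the triangle inequality theorem just established. Non-negativity is immediate because $\delta$ takes values in $\mathbb{R}^+$, and symmetry is immediate because $a \vee b = b \vee a$, so $d_\delta(a,b) = \delta(a \vee b) = \delta(b \vee a) = d_\delta(b,a)$.

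For the identity of indiscernibles, first observe that if $a = b$ then $a \vee b = a \in A(L)$, so $d_\delta(a,b) = \delta(a) = 0$ by the first condition of Definition~\ref{defn:basic}. Conversely, suppose $d_\delta(a,b) = \delta(a \vee b) = 0$. By the first condition again, $a \vee b \in A(L) \cup \{0\}$. Since $a$ is an atom we have $a \neq 0$ and $a \leq a \vee b$, so $a \vee b \neq 0$, which forces $a \vee b \in A(L)$. Now $a \leq a \vee b$ with both $a$ and $a \vee b$ atoms: an atom covers $0$, so there is no element strictly between $0$ and $a \vee b$, and since $0 < a \leq a \vee b$ this forces $a = a \vee b$. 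The same argument applied to $b$ gives $b = a \vee b$, hence $a = b$.

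Finally, the triangle inequality $d_\delta(a,c) \leq d_\delta(a,b) + d_\delta(b,c)$ is precisely $\delta(a \vee c) \leq \delta(a \vee b) + \delta(b \vee c)$, which holds by the theorem established immediately above, since $b$, being an atom, is nonzero. I do not expect a genuine obstacle here; the only point needing a moment's care is the identity of indiscernibles, where one must invoke the fact that an atom has nothing strictly below it but $0$ in order to conclude that $a \leq a \vee b$ with $a \vee b$ an atom forces equality. Everything else reduces to the triangle inequality theorem and elementary lattice manipulations.
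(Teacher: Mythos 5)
Your proof is correct and follows essentially the same route as the paper's: symmetry and non-negativity are immediate, the identity of indiscernibles is handled by observing that $\delta(a\vee b)=0$ forces $a\vee b$ to be $0$ or an atom and then using the covering property of atoms to get $a=a\vee b=b$, and the triangle inequality is the specialization of the lattice triangle inequality theorem with the middle element an atom, hence nonzero. Your treatment of the $a\vee b=0$ case (ruling it out since atoms are nonzero) is a minor, harmless refinement of the paper's phrasing.
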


\begin{proof}
It is clear that $d_\delta$ is a non-negative, symmetric function satisfying $d_\delta(a,a)=0$ for any $a\in A(S)$. If $a,b\in A(L)$ are two elements such that $d_\delta(a,b)=0$, then $a\vee b=0$ or $a\vee b\in A(L)$. In the former case, it follows that $a=b=0$. In the latter case, $a\vee b$ covers $0$ and also $a\vee b\geq a,b$, which yields $a=b$.

Finally, let $a,b,c\in A(L)$. Then $c\neq 0$ and therefore,
	\[
	d_\delta(a,b)=\delta(a\vee b)\leq \delta(a\vee c)+\delta(c\vee b) = d_\delta(a,c) + d_\delta(c,b).
	\]
  \end{proof}

Beside generalizing metrics, lattice diversities can also be regarded as a generalization of some multiway metrics that have been defined in the literature. Here we show that  lattice diversities define an $n$-\emph{way distance} on $A(L)$. Deza and Rosenberg \cite{deza2000n}  define an $n$-way distance on a set $X$ to be a totally symmetric map $d \colon X^n \rightarrow \mathbb{R}^+$ such that, for all $x_1, \ldots, x_{n+1} \in X$ 
\begin{enumerate}
\item $d(x_1, \ldots, x_1)=0$,
\item $
d(x_1,\ldots, x_n) \leq \sum_{i=2}^{n+1} d(x_1,\ldots, x_{i-1}, x_{i+1}, \ldots, x_{n+1})
$
\item
$
d(x_1, x_1, x_3, \ldots,  x_n)= d(x_1, x_3, x_3, \ldots, x_n) \leq d(x_1,x_2,x_3,\ldots x_n).
$
\end{enumerate}

We note that, in the terminology of \cite{deza2000n}, every $n$-way distance is also a {\em weak $n$-way distance}, and therefore also an $(n-1)$-semi-metric.

\begin{prop}
Let $\delta$ be a diversity on a lattice $L$, 
and define $d_{n,\delta}:X^n \rightarrow \Re^{+}$ by
	$$d_{n,\delta}(a_1,...,a_n)=\delta\left(a_1\vee...\vee a_n \right).$$
Then $d_{n,\delta}$ is an $n$-way distance on $A(L)$. 
\end{prop}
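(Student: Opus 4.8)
The plan is to check, for $n+1$ atoms $a_1,\dots,a_{n+1}\in A(L)$, the three conditions of Deza and Rosenberg together with total symmetry, observing that the only facts about $\delta$ actually used are nonnegativity, monotonicity, and that $\delta$ vanishes on atoms. Total symmetry is immediate: since $\vee$ is commutative and associative, permuting $a_1,\dots,a_n$ leaves $a_1\vee\cdots\vee a_n$ unchanged, hence leaves $d_{n,\delta}$ unchanged.

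First I would dispose of condition~(1): by idempotency of $\vee$ and the first lattice-diversity axiom, $d_{n,\delta}(a_1,\dots,a_1)=\delta(a_1)=0$ since $a_1\in A(L)$. Next, condition~(3): idempotency of $\vee$ gives
\[
d_{n,\delta}(a_1,a_1,a_3,\dots,a_n)=\delta(a_1\vee a_3\vee\cdots\vee a_n)=d_{n,\delta}(a_1,a_3,a_3,\dots,a_n),
\]
and since $a_1\vee a_3\vee\cdots\vee a_n\le a_1\vee a_2\vee\cdots\vee a_n$, monotonicity of $\delta$ shows this common value is at most $\delta(a_1\vee\cdots\vee a_n)=d_{n,\delta}(a_1,\dots,a_n)$.

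The one condition that looks substantive is the simplex inequality~(2). One is tempted to derive it from Proposition~\ref{prop:general_subadditivity}, but that proposition needs the meet of its arguments to be nonzero, and distinct atoms meet in $0$, so subadditivity is essentially never available on $A(L)$. Instead, the point is that the summand of the right-hand side corresponding to $i=n+1$ deletes $a_{n+1}$ and therefore equals $d_{n,\delta}(a_1,\dots,a_n)$, which is exactly the left-hand side; every remaining summand is a value of $\delta$, hence nonnegative. So the left-hand side is bounded by the right-hand sum and condition~(2) holds. The main obstacle is thus not a calculation but noticing this degeneracy: once one sees that one of the simplex-inequality terms already reproduces the left-hand side, nonnegativity of $\delta$ closes the argument.
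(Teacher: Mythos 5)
Your handling of symmetry and of conditions (1) and (3) is fine and matches the paper. The problem is condition (2). Your argument rests entirely on the observation that the $i=n+1$ summand on the right-hand side, which deletes $a_{n+1}$, coincides with the left-hand side, so that the inequality is vacuous for any nonnegative function. That degeneracy is an artifact of an off-by-one slip in how the paper transcribes the Deza--Rosenberg definition: in the actual simplex inequality each right-hand term is obtained by replacing one of $x_1,\dots,x_n$ by the new point $x_{n+1}$, so every term involves $x_{n+1}$ and none of them equals $d(x_1,\dots,x_n)$. An axiom that holds trivially for every nonnegative symmetric function carries no content, so "noticing the degeneracy" cannot be the intended proof, and your argument collapses as soon as the inequality is stated correctly. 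The paper's own proof is substantive and does not use this shortcut.

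The idea you explicitly dismissed is in fact the right one. You argue that Proposition~\ref{prop:general_subadditivity} is "essentially never available on $A(L)$" because distinct atoms meet in $0$; but the proposition is not applied to the atoms themselves. Set $b_i=\bigvee_{j\neq i}a_j$ (the join of all but one of the $n+1$ atoms), where $i$ ranges over the indices being deleted. All of these $b_i$ lie above a common atom (the one that is never deleted), so
\[
\bigwedge_i b_i \;\geq\; a_{1} \;>\; 0 ,
\]
and Proposition~\ref{prop:general_subadditivity} applies to the family $\{b_i\}$, giving $\delta\bigl(\bigvee_i b_i\bigr)\leq\sum_i\delta(b_i)$. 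Since $\bigvee_i b_i=\bigvee_{j=1}^{n+1}a_j\geq a_1\vee\cdots\vee a_n$, monotonicity yields
\[
\delta\left(a_1\vee\cdots\vee a_n\right)\;\leq\;\delta\left(\bigvee_i b_i\right)\;\leq\;\sum_i \delta(b_i)\;=\;\sum_i d_{n,\delta}(\ldots),
\]
which is the genuine simplex inequality. This is the step missing from your proposal: subadditivity is invoked at the level of the $n$-fold joins $b_i$, where a common nonzero lower bound exists, not at the level of individual atoms.
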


\begin{proof}
We establish each property is turn.
\begin{enumerate}
\item $d_{n,\delta}(a_1,\ldots,a_1) = \delta(a_1 \vee \ldots \vee a_1)=\delta(a_1)=0$.
\item Let $b_i = a_1 \vee \ldots \vee a_{i-1} \vee a_{i+1} \vee \ldots \vee a_{n+1}$, for $i=2,\ldots, n+1$. So we have to show $\delta( \bigvee_{i=1}^n a_i) \leq \sum_{i=2}^{n-1} \delta(b_i)$. Note that all such $b_i$ satisfy $a_1 \leq b_i$ and  so
\[
\bigwedge_{i=2}^{n+1} b_i \geq a_1 >0.
\]
Also note by expanding the $b_i$ and using basic lattice identities that 
\[
\bigvee_{i=2}^{n+1} b_i = \bigvee_{i=1}^{n+1} a_i \geq  \bigvee_{i=1}^{n} a_i.
\]
Hence applying monotonicty and Proposition~\ref{prop:general_subadditivity}, we have 
\[
\delta\left( \bigvee_{i=1}^n a_i \right) \leq \delta\left(\bigvee_{i=2}^{n+1} b_i\right) \leq \sum_{i=2}^{n+1} \delta(b_i),
\]
which gives the inequality.
\item This follow from $d_{n,\delta}$ only depending on the set of values in its arguments, and not how many times they appear, and the fact that $\delta$ is monotonic.
\end{enumerate}
  \end{proof}




\section{General Lattice Diversity Constructions}
\label{sec:always_diversities}

Here we show that given a lattice $L$ with a $0$ we can always define a function $\delta \colon L \rightarrow \Re_{\geq 0}$ such that $(L,\delta)$ is a lattice diversity.
Then we show some other general constructions for more restrictive classes of lattices.

\begin{thm}
Let $L$ be any lattice with a $0$ and let $\delta:L\rightarrow \Re_{\geq 0}$ be the function given by 
	$$\delta(a)=\begin{cases} 0, & a\in A(L)\cup\{0\},\\
	1, & \text{otherwise.} \end{cases}$$
	Then $(L,\delta)$ is a lattice diversity.
\end{thm}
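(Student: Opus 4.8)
The plan is to verify the three axioms of Definition~\ref{defn:basic} directly, exploiting the fact that $\delta$ takes only the values $0$ and $1$, so that every inequality $\delta(x)\le\delta(y)$ reduces to the implication ``$\delta(x)=1 \Rightarrow \delta(y)=1$'', equivalently ``$\delta(y)=0 \Rightarrow \delta(x)=0$''.

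Axiom~1 holds by the very definition of $\delta$: $\delta(a)=0$ precisely when $a\in A(L)\cup\{0\}$. For axiom~2 (monotonicity), suppose $a\le b$ and $\delta(b)=0$; I must show $\delta(a)=0$, i.e.\ $a\in A(L)\cup\{0\}$. If $b=0$ then $a\le 0$ forces $a=0$. If $b$ is an atom, then $b$ covers $0$, so there is no element strictly between $0$ and $b$; since $a\le b$, either $a=0$ or $a=b$, and in both cases $a\in A(L)\cup\{0\}$. This is the whole content of monotonicity.

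For axiom~3 (subadditivity on non-disjoint elements), assume $a\wedge b\neq 0$. Since $\delta(a\vee b)\le 1$, the inequality $\delta(a\vee b)\le\delta(a)+\delta(b)$ can only fail when $\delta(a)+\delta(b)=0$, i.e.\ $\delta(a)=\delta(b)=0$. So the crux is to rule this out, or rather to show that in this case $\delta(a\vee b)=0$ as well. If $\delta(a)=\delta(b)=0$ then $a,b\in A(L)\cup\{0\}$; but $a\wedge b\neq 0$ combined with $0\wedge x=0$ forces $a\neq 0$ and $b\neq 0$, so $a,b\in A(L)$. Now $0\neq a\wedge b\le a$ with $a$ an atom forces $a\wedge b=a$, i.e.\ $a\le b$; symmetrically $b\le a$; hence $a=b$. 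Then $a\vee b=a\in A(L)$, so $\delta(a\vee b)=0=\delta(a)+\delta(b)$, and the inequality holds.

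The argument has no real obstacle; the only place requiring a small observation is the last step of axiom~3, namely that two atoms with nonzero meet coincide (because an atom covers $0$, so its only nonzero sub-element is itself). I would present that observation explicitly and let the rest follow by the case analysis above.
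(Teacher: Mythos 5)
Your proposal is correct and follows essentially the same route as the paper's proof: the only nontrivial point in both is the subadditivity axiom, settled by the observation that two atoms with nonzero meet must coincide (you merely spell out the monotonicity case analysis that the paper dismisses as straightforward, and note directly that $a\wedge b\neq 0$ rules out $a=0$ or $b=0$ rather than treating that case separately). No gaps.
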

\begin{proof}
The first two conditions of Definition~\ref{defn:basic} are straightforward to verify.
For the third condition, let $a,b \in L$ with $a \wedge b \neq 0$. The only way the inequality can fail is if $\delta(a \wedge b)=1$ and $\delta(a)=\delta(b)=0$. 

So we suppose that $\delta(a)=\delta(b)=0$. Then both $a, b \in A(L) \cup {0}$. If either $a$ or $b$ is $0$, then we immediately get $\delta(a \wedge b)=0$. So suppose both $a,b \in A(L)$.
Since $a \wedge b \neq 0$, 
we must have $a=b=a\wedge b$ and so $\delta(a \wedge b)=0$. So the inequality always holds. 
  \end{proof}

A lattice diversity $(L,\delta)$ is said to be {\em strictly monotone} if $a<b$ implies $\delta(a) < \delta(b)$ for all $a,b \neq 0$. We now show that with a few additional restrictions on $L$ there always exists at least one diversity $(L,\delta)$ that is strictly monotone.


For $a,b\in L$ such that $a\leq b$, we denote $I[a,b]=\{c\in L | a\leq c\leq b\}.$ 
A lattice $L$ is modular if for any elements $a,b,c \in L$, if $c\leq a$, then $a\wedge (b\vee c)= (a\wedge b)\vee c$. A lattice $L$ is said to be of \emph{finite height} if there is an upper bound to the length of chains in $L$, where the length of a chain of $n+1$ elements is $n$. The height of $L$ is the least such upper bound. $L$ is said to be \emph{sectionally of finite height} iff $L$ has a $0$, and for every $a\in L$, the interval $I[0,a]$ is of finite height. In this case, the height of $I[0,a]$ will be denoted by $h(a)$ and called the \emph{height} of $a$ \cite{mckenzie2018algebras}. An interesting example of a height function is for the divisibility lattice (Example \ref{ex:divis}) for which we know $h=\Omega$ the prime Omega function. 

We note that all distributive lattices are modular. As well, all finite lattices are sectionally of finite height. Also, for any set $X$ the lattice $\Pf(X)$  is modular and sectionally of finite height. In this case, $h$ gives the cardinality of a  subset.

Height allows us to define a strictly monotone diversity on a very general class of lattices.
The following example generalizes the classical diversity on sets that assigns the value cardinality minus 1 to non-empty finite subsets.

\begin{thm}
 Let $L$ be a modular lattice sectionally of finite height and let $\delta_h:L\rightarrow \Re_{\geq 0}$ be given by
	$$\delta_h(a)=\begin{cases} 0, & a=0,\\
	h(a)-1, & \text{otherwise.} \end{cases}$$
Then $(L,\delta_h)$ is a strictly monotone lattice diversity.
\end{thm}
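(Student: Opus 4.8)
The strategy is to verify the three axioms of Definition~\ref{defn:basic} directly, with the first being essentially automatic, the second following from a standard fact about height in graded intervals, and the third being the substantive part where modularity enters. Throughout I would use the fact that in a lattice sectionally of finite height the height function is well-defined on $I[0,a]$ for each $a$, and that $h(0)=0$ while $h(a)=1$ precisely when $a\in A(L)$.

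\textbf{Axiom 1 and strict monotonicity of axiom 2.} For the first condition, note $\delta_h(a)=0$ iff $a=0$ or $h(a)=1$, and $h(a)=1$ iff $a$ is an atom; so $\delta_h(a)=0\iff a\in A(L)\cup\{0\}$. For monotonicity I would show the key sublemma that $a\le b$ implies $h(a)\le h(b)$, and moreover $a<b$ implies $h(a)<h(b)$: any maximal chain from $0$ to $a$ can be extended by at least one step to reach $b$, and in a modular lattice sectionally of finite height all maximal chains in $I[0,b]$ have the same length (the Jordan--Hölder / Dedekind property for modular lattices of finite height), so $h$ is strictly order-preserving on $I[0,b]$. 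This gives $\delta_h(a)=h(a)-1<h(b)-1=\delta_h(b)$ whenever $0<a<b$, and the case $a=0$ is immediate; hence $(L,\delta_h)$ is strictly monotone once axiom~3 is checked, and in particular monotone.

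\textbf{Axiom 3 (subadditivity on non-disjoint elements).} This is where I expect the real work. Let $a\wedge b\neq 0$; I must show $h(a\vee b)-1\le (h(a)-1)+(h(b)-1)$, i.e.
\[
h(a\vee b)\le h(a)+h(b)-1.
\]
The natural tool is the modular law in its submodularity form: for any $a,b$ in a modular lattice of finite height one has the \emph{modularity (dimension) equality}
\[
h(a\vee b)+h(a\wedge b)=h(a)+h(b),
\]
valid whenever all the relevant intervals have finite height --- here they all sit inside $I[0,a\vee b]$, which is of finite height since $L$ is sectionally of finite height. (One proves this equality by the isomorphism of intervals $I[a\wedge b,\,b]\cong I[a,\,a\vee b]$ in a modular lattice, which gives $h(a\vee b)-h(a)=h(b)-h(a\wedge b)$.) Combining this with $a\wedge b\neq 0$, which forces $h(a\wedge b)\ge 1$, yields
\[
h(a\vee b)=h(a)+h(b)-h(a\wedge b)\le h(a)+h(b)-1,
\]
which is exactly the required inequality. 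One should also handle the degenerate cases where $a=0$ or $b=0$ separately, but these cannot occur since $a\wedge b\ne 0$ forces both $a,b\ne 0$, so $\delta_h$ takes its ``otherwise'' value at $a$, $b$, and $a\vee b$, and the displayed inequality is precisely subadditivity.

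\textbf{Main obstacle.} The crux is establishing the modularity equality $h(a\vee b)+h(a\wedge b)=h(a)+h(b)$ (equivalently, that $h$ behaves like a dimension/rank function), which rests on the interval isomorphism $I[a\wedge b,b]\cong I[a,a\vee b]$ characteristic of modular lattices together with the fact that in a modular lattice of finite height all maximal chains between two comparable elements have equal length. Both facts are classical (see e.g.\ \cite{davey2002introduction} or \cite{mckenzie2018algebras}), so I would either cite them or include the short argument; everything else is bookkeeping. Note that modularity is genuinely needed here --- in a non-modular lattice $h$ need not be additive in this sense, and indeed the earlier four-chain counterexample shows monotonicity itself can fail for ad hoc ``height-like'' assignments, so the structural hypothesis on $L$ is doing real work.
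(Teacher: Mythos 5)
Your proposal is correct and follows essentially the same route as the paper: verify the three axioms using the classical facts that $h$ is strictly order-preserving and satisfies the modular rank equality $h(a\vee b)+h(a\wedge b)=h(a)+h(b)$, with $a\wedge b\neq 0$ giving $h(a\wedge b)\geq 1$ (equivalently $\delta_h(a\wedge b)\geq 0$) for subadditivity. The only difference is that the paper simply cites these height-function properties (Theorem 2.39 of \cite{mckenzie2018algebras}) where you sketch their proofs via the interval isomorphism and Jordan--H\"older argument.
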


\begin{proof}
Theorem 2.39 of \citep{mckenzie2018algebras} states that under the conditions of the theorem, $h(0)=0$, $a<b$ implies $h(a)<h(b)$, and $h(a)+h(b) = h(a \vee b) + h(a \wedge b)$. The first
condition of being a lattice diversity follows since $h(a)=1$ if and only if $a \in A(L)$.
 The second condition of being a lattice diversity follows from the monotonicity of $h$.

To show the third condition, suppose $a \wedge b \neq 0$. This implies none of $a, b, a \vee b$ are $0$ either, and so
\begin{eqnarray*}
\delta_h(a \vee b) & = & h(a \vee b)-1 \\
& = & h(a)+ h(b) -h (a \wedge b) -1 \\
& = & [h(a)-1] + [h(b)-1] - [h(a \wedge b)-1] \\
&= & \delta_h(a) + \delta_h(b) - \delta_h(a \wedge b) 
\end{eqnarray*}
which is less than $\delta_h(a) + \delta_h(b)$ since $\delta_h(a \wedge b)\geq 0$, giving the desired inequality.
  \end{proof}

The classical diversity that assigns the cardinality of a set to sets of more than two points can be  generalized  with the following result.

\begin{cor}
 Let $L$ be a modular lattice sectionally of finite height and let $\delta_c:L\rightarrow \Re_{\geq 0}$ be given by
	$$\delta_c(a)=\begin{cases} 0, & a\in A(L)\cup\{0\},\\
	h(a), & \text{otherwise.} \end{cases}$$
Then $(L,\delta_c)$ is a lattice diversity.
\end{cor}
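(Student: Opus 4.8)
The plan is to deduce the corollary directly from the previously established theorem concerning $\delta_h$, rather than redoing the modular-lattice bookkeeping from scratch. Observe that for $a \notin A(L) \cup \{0\}$ we have $\delta_c(a) = h(a) = \delta_h(a) + 1$, while for $a = 0$ both functions vanish; the only genuine discrepancy is on atoms, where $\delta_h$ gives $h(a) - 1 = 0$ but $\delta_c$ also gives $0$. So in fact $\delta_c$ and $\delta_h$ agree on $A(L) \cup \{0\}$ and differ by the additive constant $1$ everywhere else. This makes the first two axioms immediate: condition (1) holds because $\delta_c(a) = 0$ iff $a \in A(L) \cup \{0\}$ by construction, and condition (2), monotonicity, follows because $h$ is monotone (Theorem 2.39 of \citep{mckenzie2018algebras}) and adding a constant on the complement of $A(L)\cup\{0\}$ cannot break monotonicity — I would check the one slightly delicate case, namely $a \in A(L)$ with $a < b$, where $\delta_c(a) = 0 \le h(b) = \delta_c(b)$ since $h(b) \ge h(a) = 1 > 0$.

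The substance is the third axiom, subadditivity on non-disjoint elements. Suppose $a \wedge b \neq 0$. As in the $\delta_h$ proof, none of $a$, $b$, $a \vee b$ is $0$. I would split into cases according to whether $a$, $b$, $a \wedge b$, $a\vee b$ lie in $A(L)$. If none of $a$, $b$, $a\vee b$ is an atom, then $\delta_c = \delta_h + 1$ on all three, so
\[
\delta_c(a \vee b) = \delta_h(a\vee b) + 1 \le \delta_h(a) + \delta_h(b) + 1 \le \delta_c(a) + \delta_c(b),
\]
the last inequality being an equality unless $a$ or $b$ is an atom. If, say, $a \in A(L)$, then $\delta_c(a) = 0$ but $h(a) = 1$, so $\delta_c(a) = \delta_h(a) = 0$, and one still has $\delta_c(a\vee b) = h(a \vee b)$ (noting $a \vee b \ge b$ and we may assume $b$, hence $a\vee b$, is handled). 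The cleanest uniform argument: using $h(a\vee b) + h(a \wedge b) = h(a) + h(b)$ and $h(a\wedge b) \ge 1$ (since $a \wedge b \neq 0$), we get $h(a\vee b) \le h(a) + h(b) - 1$, and therefore
\[
\delta_c(a \vee b) \le h(a\vee b) \le h(a) + h(b) - 1 = \big(h(a) - \mathbf{1}[a \notin A(L)]\big) + \big(h(b) - \mathbf{1}[b\notin A(L)]\big) + \big(\mathbf{1}[a\notin A(L)] + \mathbf{1}[b \notin A(L)] - 1\big).
\]
Since $a \wedge b \neq 0$ forces at least one of $a$, $b$ to be non-atomic (two distinct atoms meet in $0$, and if $a=b$ is an atom the inequality is trivial as $\delta_c(a\vee b)=\delta_c(a)=0$), the bracketed correction term is $\le \mathbf{1}[a\notin A(L)] + \mathbf{1}[b\notin A(L)] - 1 \le$ the sum, wait — more carefully, when exactly one of them is an atom the correction is $0$ and when neither is it is $+1$, so in the atom-free case we recover $\delta_c(a\vee b) \le \delta_c(a)+\delta_c(b)+1$, which is \emph{not} good enough.

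So the one real obstacle is precisely the case where neither $a$ nor $b$ is an atom: there the naive bound loses a $+1$. The fix is that in that case $a \wedge b$ need not be an atom, but more to the point we should compare against $\delta_c(a) + \delta_c(b)$ directly using $h$: we need $h(a\vee b) \le h(a) + h(b)$, which is immediate from $h(a\vee b) = h(a) + h(b) - h(a\wedge b)$ and $h(a\wedge b) \ge 0$. That is the clean path — forget the $\delta_h$ comparison for the inequality and just note $\delta_c(a\vee b) = h(a\vee b) \le h(a)+h(b)$, then observe $h(a) \le \delta_c(a)$ whenever $a$ is non-atomic (equality) and $h(a) = 1 > 0 = \delta_c(a)$ when $a$ is an atom, so this last step requires that whenever $a$ is an atom we can afford to drop $h(a)=1$. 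Concretely: if $a \in A(L)$ then $a \wedge b \ne 0$ and $a$ an atom force $a \le b$, so $a \vee b = b$ and $\delta_c(a\vee b) = \delta_c(b) = \delta_c(a) + \delta_c(b)$; symmetrically if $b$ is an atom; and if neither is an atom, $\delta_c(a \vee b) = h(a\vee b) \le h(a) + h(b) = \delta_c(a) + \delta_c(b)$. This case analysis is short and is the part I would write out in full; everything else is bookkeeping citing Theorem 2.39.
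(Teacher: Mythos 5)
Your final argument is correct and is essentially the paper's own proof: the same case split on whether $a$ or $b$ is an atom (an atom meeting $b$ nontrivially forces $a\le b$, so $a\vee b=b$), plus the modular identity $h(a\vee b)=h(a)+h(b)-h(a\wedge b)\le h(a)+h(b)$ when neither is an atom, with the first two axioms inherited from the $\delta_h$ theorem. The abandoned detour via $\delta_c=\delta_h+1$ off the atoms is harmless, and you correctly identified why it loses a $+1$; just note explicitly (as the paper does) that when neither $a$ nor $b$ is an atom, $a\vee b$ is not one either, so $\delta_c(a\vee b)=h(a\vee b)$.
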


\begin{proof}
The proof that $\delta_c$ satisfies the first two diversity axioms follows exactly as in the previous theorem. To show the third condition, suppose $a \wedge b \neq 0$. If $a$ and $b$ are atoms we must have $a=b=a \vee b$ and so the inequality holds with $\delta_h(a)=\delta_h(b) = \delta_h(a \vee b)=0$. If $a$ is an atom and $b$ is not, then $a \leq b = a \vee b$, and so the inequality holds with $\delta_h(a)=0$ and $\delta_h(b)=\delta_h(a \vee b)$. Finally, if neither $a$ nor $b$ are atoms, neither is $a \vee b$ and so
$$\delta_h(a \vee b) = h(a \vee b)= h(a) + h(b) - h(a \wedge b) \leq h(a) + h(b) = \delta_h(a) + \delta_h(b).$$ 
  \end{proof}

The lattice diversity  we have defined in the last corollary using $h$ is a special case of lattices defined using valuations.
A \emph{valuation} on a lattice $L$ is a non-negative real-valued function on $L$ such that 
	$$v(a\wedge b ) + v(a\vee b) = v(a) + v(b).$$
A \emph{sub-valuation} on a lattice $L$ is a non-negative real valued function on $L$ such that 
	$$v(a\wedge b ) + v(a\vee b) \leq v(a) + v(b).$$
A valuation (resp. sub-valuation) is \emph{positive} if $v(a)<v(b)$ whenever $a<b$ (for more details on valuations, see \citep{birkhoff1967lattice} or \citep[Section 10.3]{deza2009encyclopedia}). The height function is an example of a positive valuation on modular lattices of sectionally finite height. Another example of a positive valuation is the function $\log(x)$ on the lattice of divisibility of positive integers.

The following theorem states that we can define diversities from positive sub-valuations on a lattice. 

\begin{thm}
Let $L$ be a lattice with a $0$ and let $v:L\rightarrow\Re_{\geq 0}$ be a positive sub-valuation with $v(0)=0$. Let $\delta_v:L\rightarrow \Re_{\geq 0}$ be given by
	$$\delta_v(a)=\begin{cases} 0, & a\in A(L),\\
	v(a), & \text{otherwise.} \end{cases}$$
Then, $(L,\delta_v)$ is a strictly monotone lattice diversity.
\end{thm}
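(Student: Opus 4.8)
The plan is to verify the three axioms of Definition~\ref{defn:basic} for $\delta_v$, mirroring the structure of the preceding corollary but replacing the explicit height function by the abstract sub-valuation $v$. The key facts I would use are that a positive sub-valuation is strictly monotone on nonzero elements (if $a < b$ then $v(a) < v(b)$), that $v(0) = 0$, and that $v$ is nonnegative.

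\textbf{Axiom 1.} I would first argue that $v(a) > 0$ for every $a \notin \{0\}$: since $a \neq 0$ means $0 < a$, positivity gives $v(0) < v(a)$, i.e. $0 < v(a)$. Hence $\delta_v(a) = 0$ exactly when $a \in A(L)$, or when $a = 0$ (in which case $\delta_v(0) = v(0) = 0$, noting $0 \notin A(L)$ so the ``otherwise'' branch applies and returns $v(0)=0$). So $\delta_v(a)=0$ iff $a \in A(L) \cup \{0\}$, as required.

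\textbf{Axiom 2 (monotonicity, in fact strict).} Suppose $a \leq b$. If $a = b$ there is nothing to prove. If $a < b$: when $a = 0$ we have $\delta_v(a) = 0 \leq \delta_v(b)$; when $a \in A(L)$ we have $\delta_v(a) = 0 \leq \delta_v(b)$ since $\delta_v \geq 0$ everywhere; and when $a \notin A(L) \cup \{0\}$ then $b \notin A(L) \cup \{0\}$ either (as $b > a$ is not an atom and not $0$), so $\delta_v(a) = v(a) < v(b) = \delta_v(b)$ by positivity. This simultaneously establishes the strict monotonicity clause (for $a < b$ with $a, b \neq 0$, one checks the atom case gives $0 = \delta_v(a) < v(b) = \delta_v(b)$ as well).

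\textbf{Axiom 3 (subadditivity on non-disjoint elements).} Suppose $a \wedge b \neq 0$. Then $a, b, a \vee b$ are all nonzero. I would split on whether $a$ or $b$ is an atom, exactly as in the corollary. If both are atoms, $a \wedge b \neq 0$ forces $a = b = a \vee b$, and all three $\delta_v$ values vanish. If $a$ is an atom and $b$ is not, then $a \leq b$ (an atom below $b$ since $a = a \wedge b \leq b$... actually $a\wedge b \neq 0$ and $a$ an atom force $a \wedge b = a$, hence $a \leq b$), so $a \vee b = b$ and the inequality reads $\delta_v(b) \leq 0 + \delta_v(b)$. If neither $a$ nor $b$ is an atom, then $a \vee b$ is not an atom either, so $\delta_v(a \vee b) = v(a \vee b) \leq v(a) + v(b) - v(a \wedge b) \leq v(a) + v(b) = \delta_v(a) + \delta_v(b)$, using the sub-valuation inequality and $v(a \wedge b) \geq 0$.

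I do not expect any serious obstacle here; the only point requiring a little care is the bookkeeping of the atom/non-atom cases in Axiom~3, and in particular confirming that when exactly one of $a, b$ is an atom, the non-disjointness hypothesis really does force that atom below the other element (so that $a \vee b$ equals the non-atom and stays out of the ``$\delta_v = 0$'' set). Everything else is a direct translation of the valuation identity into the sub-valuation inequality.
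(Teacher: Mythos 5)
Your proof is correct and follows essentially the same route as the paper: verify the three axioms directly, using positivity of $v$ for the zero-set and strict monotonicity, and a case split on which of $a,b$ are atoms together with the sub-valuation inequality $v(a\vee b)\leq v(a)+v(b)-v(a\wedge b)$ for the subadditivity axiom. Your handling of the single-atom case (deducing $a\wedge b=a$, hence $a\leq b$ and $a\vee b=b$) matches the paper's argument, and your statement of the sub-valuation inequality is in fact slightly cleaner than the paper's.
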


\begin{proof}
If $a\in A(L)\cup\{0\}$, then $\delta_v(0)=0$. If $a\in L$ is such that $\delta_v(a)=0$, then $a\in A(L)$ or $v(a)=0$, the latter possibility implying that $a=0$, since $v$ is a positive sub-valuation.\\
Let $a,b\in L\setminus A(L)$ such that $a\leq b$. Then $v(a)\leq v(b)$ and therefore, $\delta_v(a)\leq \delta_v(b)$. If $a\in A(L)$, then $\delta_v(a)=0\leq \delta_v(b)$. The argument is similar if $b\in A(L)$.\\
\\
To check that $\delta_v$ is strictly monotone, let $0<a<b$. Either $a \in A(L)$, in which case $b \not\in A(L)$ and so $\delta_v(a)=0<v(b)=\delta_v(b)$
or $\delta_v(a)=v(a)<v(b)=\delta_v(b)$.\\
Since $v$ is a sub-valuation, we have that
	\begin{equation}\label{eq_valuation}
	v(a\vee b) = v(a)+v(b) - v(a\wedge b) \leq v(a)+v(b).
	\end{equation}
Let $a,b\in L$ such that $a\wedge b\neq 0$. Assume first that $a,b\not\in A(L)\cup\{0\}$, then (\ref{eq_valuation}) implies $\delta_v(a\vee b) \leq \delta_v(a)+\delta_v(b)$. If $a=0$ or $b=0$, then we cannot have $a\wedge b\neq 0$. If $a\in A(L)$, then $a\leq b$ (otherwise, if $a$ and $b$ are not comparable, we would have $a\wedge b = 0$) and then, $v(a\vee b)=v( b)\leq v(a)+v(b)$. 	 
  \end{proof}



\section{Finite Distributive Lattice Diversities} \label{sec:distributive}

In the classical theory of diversities introduced by \citep{Bryant2012}, diversity functions are defined on the lattice of finite subsets of a set, which is of course a distributive lattice. If we take the restriction of the diversity to a sublattice of this lattice, we obtain a lattice diversity in our framework here. This raises the question of whether all lattice diversities on distributive lattices can be obtained this way. Here we show that the answer is yes in the finite case, in that every  lattice diversity on a finite distributive lattice is isomorphic to a restriction of a classical diversity to a sublattice of the lattice of finite subsets.

We recall that Birkhoff's representation theorem \citep[Theorem 5.12]{davey2002introduction} states that every finite distributive lattice $L$ is isomorphic to the lattice of down-sets $\mathcal{O}\left(J(L)\right)$ of the join-irreducible elements $J(L)$ via the isomorphism
	\begin{equation} \label{eqn:isomorph}
	\eta(a)=J(L)\cap \downarrow a,
	\end{equation}
	whose inverse is given by
	\[
	\eta^{-1}(A) = \bigvee A,
	\]
	for $A \subseteq J(L)$ \cite[Prop 2.45]{davey2002introduction}.
Therefore, if $L$ is a finite distributive lattice with $0$, and $(L,\delta)$ is a lattice diversity, then it induces a lattice diversity $(\mathcal{O}\left(J(L)\right),\hat{\delta})$ in a canonical way, that is, $\hat{\delta}(A) = \delta( \eta^{-1}(A)) = \delta(\bigvee A)$. In the next theorem we show that this can be extended to a diversity on $\mathcal{P}\left(J(L)\right)$, the set of all subsets of $J(L)$.

\begin{thm}
Let $(L,\delta)$ be a lattice diversity where $L$ is finite distributive lattice.
For $A\subseteq J(L)$, we define
	$$\hat{\delta}(A)=\begin{cases}
	\delta \left( \bigvee A \right), & \text{if }\vert A \vert \geq 2 \\
	0, & \text{otherwise.}
	\end{cases}$$
Then $(J(L),\hat{\delta})$ is a classical diversity whose restriction to $\mathcal{O}\left(J(L)\right)$ is isomorphic to $(L,\delta)$.
\end{thm}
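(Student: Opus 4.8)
The plan is to verify the two classical diversity axioms for $\hat\delta$ on $\Pf(J(L))=\pow(J(L))$ directly from Definition~\ref{defn:basic} applied to $(L,\delta)$, exploiting the Birkhoff isomorphism $\eta$ in the background. For the first axiom, $\hat\delta(A)=0$ iff $|A|\le 1$: the case $|A|\le 1$ is immediate from the definition, and conversely if $|A|\ge 2$ we must show $\delta(\bigvee A)\neq 0$, i.e. that $\bigvee A$ is neither $0$ nor an atom of $L$. Since $A\subseteq J(L)$ with $|A|\ge 2$, pick distinct $x,y\in A$; then $\bigvee A\ge x\vee y$, and $x,y$ being distinct join-irreducibles means $x\vee y$ strictly dominates both (if $x\vee y=x$ then $y\le x$, and since $x$ is join-irreducible this would force... actually one needs $x,y$ incomparable or uses join-irreducibility carefully), so $\bigvee A$ has height at least $2$ under $0$ and hence is not an atom; monotonicity of $\delta$ on $L$ together with the first axiom for $\delta$ then gives $\hat\delta(A)=\delta(\bigvee A)>0$. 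The subtlety here is the case where $x<y$ or $y<x$: if $x<y$ then $x\vee y=y\in J(L)$, which could be an atom only if $y$ covers $0$, but then $x<y$ with $x\neq 0$ (as $x\in J(L)$) contradicts $y$ being an atom; so that case is also fine. I would spell this out as the first genuine (if short) argument.

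Next I would establish monotonicity and subadditivity-on-intersecting-sets for $\hat\delta$, from which the classical triangle inequality follows by the remark in the introduction. For monotonicity, if $A\subseteq B\subseteq J(L)$ then $\bigvee A\le \bigvee B$ in $L$; if $|B|\le 1$ then $|A|\le 1$ and both sides are $0$; if $|A|\le 1<|B|$ then $\hat\delta(A)=0\le\hat\delta(B)$; and if $|A|,|B|\ge 2$ then monotonicity of $\delta$ gives $\hat\delta(A)=\delta(\bigvee A)\le\delta(\bigvee B)=\hat\delta(B)$. For subadditivity on intersecting sets, suppose $A\cap B\neq\emptyset$; then $\bigvee(A\cap B)\le \bigvee A$ and $\le\bigvee B$, and since $A\cap B\ni$ some join-irreducible $x\neq 0$ we get $(\bigvee A)\wedge(\bigvee B)\ge x>0$, so Definition~\ref{defn:basic}(3) applies: $\delta(\bigvee(A\cup B))=\delta((\bigvee A)\vee(\bigvee B))\le\delta(\bigvee A)+\delta(\bigvee B)$. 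Then I handle the degenerate cardinality cases: if $|A\cup B|\le 1$ everything is $0$; if $|A|\le 1$ (so $A=\{x\}$ and $x\in B$, giving $A\cup B=B$) then $\hat\delta(A\cup B)=\hat\delta(B)=0+\hat\delta(B)=\hat\delta(A)+\hat\delta(B)$; symmetrically for $|B|\le 1$; and if both $|A|,|B|\ge 2$ (so $|A\cup B|\ge 2$ too) the displayed inequality above is exactly what is needed. This shows $(J(L),\hat\delta)$ is a classical diversity.

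Finally, for the isomorphism claim, I would invoke Birkhoff's representation theorem as quoted: $\eta\colon L\to\mathcal{O}(J(L))$, $a\mapsto J(L)\cap\downarrow a$, is a lattice isomorphism with inverse $A\mapsto\bigvee A$. It remains to check that $\eta$ carries $\delta$ to $\hat\delta|_{\mathcal{O}(J(L))}$, i.e. $\hat\delta(\eta(a))=\delta(a)$ for all $a\in L$. For $a$ with $\eta(a)=J(L)\cap\downarrow a$ of size $\ge 2$ we have $\hat\delta(\eta(a))=\delta(\bigvee\eta(a))=\delta(\eta^{-1}(\eta(a)))=\delta(a)$. For the remaining down-sets — $\emptyset=\eta(0)$ and the singletons $\{x\}=\eta(x)$ for $x\in J(L)$ that happen to be atoms — we have $\hat\delta=0$ by definition, while $\delta(0)=0$ and $\delta(x)=0$ for $x\in A(L)$ by Definition~\ref{defn:basic}(1), so the values agree. (One should note that a singleton down-set $\{x\}$ forces $x$ to be an atom: if $y<x$ with $y\neq 0$ then $y$ lies below a join-irreducible, so $J(L)\cap\downarrow x$ would have more than one element; conversely every atom is join-irreducible and its down-set in $J(L)$ is a singleton.) Thus $\eta$ is a diversity isomorphism between $(L,\delta)$ and the restriction of $(J(L),\hat\delta)$ to $\mathcal{O}(J(L))$.

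The main obstacle, though it is minor, is the bookkeeping around the cardinality-$\le 1$ exceptions in the definition of $\hat\delta$: $\hat\delta$ is \emph{not} simply $\delta\circ\eta^{-1}$ on all of $\pow(J(L))$, because atoms of $L$ that are join-irreducible become singletons whose $\delta$-value is $0$ anyway but whose formula $\delta(\bigvee\cdot)$ would need to be overridden only harmlessly, and more importantly because subsets of $J(L)$ that are \emph{not} down-sets can have $\bigvee A$ equal to some element whose $\delta$ we must still control. Everything reduces to the fact that $\bigvee$ is monotone and that any two-element subset of $J(L)$ has join of height $\ge 2$; once that is in hand the rest is the routine case analysis sketched above.
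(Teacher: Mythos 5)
Your proof is correct and follows essentially the same route as the paper: verify the classical axioms by showing that $\bigvee A$ cannot be $0$ or an atom when $|A|\geq 2$, check monotonicity and subadditivity on intersecting sets via the lattice diversity axioms, and invoke Birkhoff's representation for the isomorphism. Your extra bookkeeping for the $|A|\leq 1$ cases (and the check that singleton down-sets correspond to atoms) is a welcome tightening of details the paper leaves implicit, but it is not a different argument.
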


\begin{proof}
By definition $\hat{\delta}(A)=0$ if $\vert A\vert \leq 1$. If $A\subseteq J(L)$ is such that $\hat{\delta}(A)=0$, then, either $\vert A \vert\leq 1$ or $\delta(\bigvee A)=0$. In the latter case, the lattice diversity axioms imply that
$\bigvee A$ is either $0$ (in which case $A$ is empty) or an atom (in which case $A$ consists of just that atom). 
Together this implies that $\hat{\delta}(A)=0$ if and only if $\vert A\vert\leq 1$.

To check monotonicity, we observe that $A \subset B$ implies $\bigvee A \leq \bigvee B$ and then use the monotonicity of $\delta$.

To check subadditivity, let $A,B\subseteq J(L)$ such that $A\cap B\neq \emptyset$. Then $\bigvee A \wedge \bigvee B  \neq \emptyset$, and so 
\[
\hat{\delta}( A \cup B) =\delta\left( \bigvee (A \cup B)\right)= \delta\left(\bigvee A \vee \bigvee B\right) \leq \delta\left(\bigvee A\right) + \delta\left(\bigvee B\right) 
= \hat{\delta}(A) + \hat{\delta}(B),
\]
as required.
  \end{proof}

\section{The Tight Span}
\label{sec:tight_span}

Here we develop the tight span theory for lattice diversities, following closely the development for classical diversities in  \cite{Bryant2012}. 


\begin{dfn}\label{def:tight_span}
Let $(L,\delta)$ be a lattice diversity. Let $P_L$ denote the set of all functions $f:L\rightarrow \Re_{\geq 0}$ satisfying
    \begin{equation}\label{eq:defn_P}
    \sum_{b\in B}f(b)\geq \delta\left(\bigvee B\right),
    \end{equation}
for any finite subset $B\subset\Pf(L)$. Write $f\preceq g$ if $f(a)\leq g(a)$ $\forall a\in L$. The \emph{tight-span} of $(L,\delta)$ is the set $T_L\subseteq P_L$ of functions that are minimal under $\preceq$.
\end{dfn}

\begin{prop}\label{prop:zerolemma}
Let $(L,\delta)$ be a lattice diversity and 
let $f\in T_L$. Then $f(0)=0$.
\end{prop}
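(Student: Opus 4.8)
The plan is to show that any $f \in P_L$ with $f(0) > 0$ cannot be minimal, by exhibiting a strictly smaller element of $P_L$ obtained from $f$ by lowering the value at $0$. Define $g$ by $g(a) = f(a)$ for all $a \neq 0$ and $g(0) = 0$; then $g \prec f$ provided $g \in P_L$, which would contradict $f \in T_L$ and force $f(0) = 0$.

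The key step is therefore to verify that $g$ satisfies the defining inequality~\eqref{eq:defn_P} for every finite $B \subseteq L$. Since $g$ and $f$ agree away from $0$, the only constraints where they differ are those indexed by a set $B$ containing $0$. So fix such a $B$, write $B = B' \cup \{0\}$ with $0 \notin B'$, and note that $\bigvee B = \bigvee B'$ because $0$ is the least element. If $B' = \emptyset$ then $\bigvee B = 0$, and the required inequality reads $g(0) = 0 \geq \delta(0) = 0$, which holds. If $B' \neq \emptyset$, then $\sum_{b \in B'} g(b) = \sum_{b \in B'} f(b) \geq \delta\left(\bigvee B'\right) = \delta\left(\bigvee B\right)$, using that $f \in P_L$ applied to the set $B'$; adding $g(0) = 0$ on the left keeps the inequality valid. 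Hence $g \in P_L$.

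Since $g(0) = 0 \leq f(0)$ and $g(a) = f(a)$ elsewhere, we have $g \preceq f$. If $f(0) > 0$ then $g \neq f$, so $g \prec f$, contradicting the minimality of $f$. Therefore $f(0) = 0$.

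I do not anticipate a genuine obstacle here; the only point requiring a little care is handling the degenerate cases $0 \in B$ with $B \setminus \{0\}$ empty, and checking that removing $0$ from the index set does not change $\bigvee B$ — both of which follow immediately from $0$ being the bottom element of the lattice.
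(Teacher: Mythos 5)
Your proposal is correct and follows essentially the same argument as the paper: replace $f$ by the function $g$ that vanishes at $0$ and agrees with $f$ elsewhere, check $g\in P_L$ by splitting on whether $0\in B$ and using $\bigvee B=\bigvee(B\setminus\{0\})$, and conclude by minimality. Your explicit treatment of the degenerate case $B=\{0\}$ is a small additional detail the paper leaves implicit, but the route is identical.
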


\begin{proof}
Let us suppose that $f(0)>0$. Let $g:L\rightarrow \Re_{\geq 0}$ be given by 
	$$g(a)=\begin{cases}
	f(a), & a\neq 0,\\
	0,& a=0.
	\end{cases}$$
Let $B\in\Pf(L)$. If $0\not\in B$, then 
	$$\sum_{b\in B}g(b)=\sum_{b\in B}f(b)\geq \delta\left(\bigvee B\right),$$
satisfying the inequality. If $0\in B$,
	$$\sum_{b\in B}g(b)=\sum_{b\in B\setminus\{0\}}f(b) + g(0)=\sum_{b\in B\setminus\{0\}}f(b)\geq \delta\left( \bigvee \left( B\setminus\{0\}\right)\right) = \delta\left(\bigvee B\right),$$
satisfying the inequality again. So $g \in P_L$. Now, observe that $g\preceq f$ and $g\neq f$. This contradicts the fact that $f$ is minimal. Hence $f(0)=0$. 
  \end{proof}

The following is analogous to Proposition 2.3 of \cite{Bryant2012}, and follows from the exact same argument  (replacing $\cup$ and $\cap$ with $\vee$ and $\wedge$, etc.)\ together with Proposition~\ref{prop:zerolemma} here.

\begin{prop}
Let $f \colon L \rightarrow \mathbb{R}^+$. Then $f \in T_L$ if and only if for all $a \in L$
\[
f(a) = \sup_{B \in \Pf(L)} \left\{ \delta\left( a \vee \bigvee B\right) - \sum_{b \in B} f(b) \right\}.
\]
\end{prop}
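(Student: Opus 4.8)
The statement is a fixed-point characterization of the tight span, exactly parallel to Proposition~2.3 of \cite{Bryant2012}, so the plan is to adapt that argument, using $\vee$ in place of $\cup$ and invoking Proposition~\ref{prop:zerolemma} where the classical argument uses $f(\emptyset)=0$. I would split the proof into the two implications.

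\emph{Necessity.} Suppose $f \in T_L$. First I would show the supremum on the right-hand side is at most $f(a)$: for any finite $B \in \Pf(L)$, applying the defining inequality~\eqref{eq:defn_P} to the finite set $B \cup \{a\}$ gives $f(a) + \sum_{b\in B} f(b) \geq \delta(a \vee \bigvee B)$, i.e.\ $\delta(a \vee \bigvee B) - \sum_{b\in B} f(b) \leq f(a)$; taking the supremum over $B$ (which includes $B = \{0\}$, or indeed $B=\emptyset$ with the convention $\bigvee \emptyset = 0$, giving the term $\delta(a) - 0 = \delta(a) \ge 0$, so the sup is over a nonempty set and is well-defined and $\geq 0$) yields ``$\leq f(a)$''. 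For the reverse inequality I would argue by minimality: define $g$ to agree with $f$ everywhere except at $a$, where $g(a)$ is set equal to the supremum on the right-hand side; then $g \preceq f$, and one checks $g \in P_L$ by verifying~\eqref{eq:defn_P} for every finite $B$. The only nontrivial case is $a \in B$: writing $B = \{a\} \cup B'$ (and handling multiplicities/repetitions by noting $\Pf(L)$ here is a set of subsets), one needs $g(a) + \sum_{b \in B'} f(b) \geq \delta(\bigvee B) = \delta(a \vee \bigvee B')$, which is exactly the definition of $g(a)$ as a supremum that in particular dominates the $B'$ term. Since $f$ is minimal and $g \preceq f$, we get $g = f$, hence $f(a)$ equals the supremum.

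\emph{Sufficiency.} Conversely, suppose $f$ satisfies the fixed-point identity for all $a$. Taking $B = \emptyset$ (so $\bigvee B = 0$) in the supremum shows $f(a) \geq \delta(a) \geq 0$, so $f$ is non-negative. To see $f \in P_L$, take any finite $C \in \Pf(L)$, pick any $c_0 \in C$, set $C' = C \setminus \{c_0\}$, and apply the identity at $a = c_0$ with the particular choice $B = C'$: this gives $f(c_0) \geq \delta(c_0 \vee \bigvee C') - \sum_{b \in C'} f(b) = \delta(\bigvee C) - \sum_{b \in C'} f(b)$, i.e.\ $\sum_{b \in C} f(b) \geq \delta(\bigvee C)$, so $f \in P_L$. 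For minimality, suppose $g \in P_L$ with $g \preceq f$; I would show $f \preceq g$, hence $f = g$. Fix $a$. Using the identity and that $g \in P_L$, for every finite $B$ we have $\delta(a \vee \bigvee B) - \sum_{b\in B} f(b) \leq \delta(a \vee \bigvee B) - \sum_{b\in B} g(b) \leq g(a)$, where the last step is the defining inequality for $g$ applied to $B \cup \{a\}$; taking the supremum over $B$ gives $f(a) \leq g(a)$.

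\textbf{Main obstacle.} The argument is essentially bookkeeping, so the only real subtlety is making sure the supremum is always over a nonempty index set (handled by allowing $B = \emptyset$, or $B = \{0\}$, together with Proposition~\ref{prop:zerolemma} to know $f(0) = 0$ so these choices behave as in the classical case) and that the ``$a \in B$'' bookkeeping in the necessity direction is done correctly given that $\Pf(L)$ is a family of subsets rather than multisets; one should double-check that adjoining or removing $a$ from a subset $B$ does not change $\bigvee B$ in a way that breaks the inequalities, which it does not since $a \vee \bigvee(B \setminus \{a\}) = \bigvee B$ whenever $a \in B$. Beyond that, the proof transfers verbatim from \cite{Bryant2012} with the dictionary $\cup \leftrightarrow \vee$, $\cap \leftrightarrow \wedge$, $\emptyset \leftrightarrow 0$.
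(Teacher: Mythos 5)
Your proof is correct and is exactly the argument the paper has in mind: the paper simply asserts that the statement "follows from the exact same argument" as Proposition~2.3 of \cite{Bryant2012} with $\cup,\cap$ replaced by $\vee,\wedge$, together with Proposition~\ref{prop:zerolemma}, and your write-up is precisely that adaptation, with the $a\in B$ bookkeeping handled properly.
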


Analogously to Proposition 2.4 of \cite{Bryant2012}, we can prove that functions in $T_L$ satisfy the following properties.

\begin{prop}\label{prop:basic_properties_TP}
Suppose that $f\in T_L$. Then
	\begin{enumerate}
	\item $f(a)\geq \delta(a)$ for any $a\in L$.
	\item Let $a,b\in L$ such that $a\leq b$ then $f(a)\leq f(b)$. That is, $f$ is monotone.
	\item $f(a\vee c)\leq \delta(a\vee b)+ f(b\vee c)$ for any $a,b,c\in L)$, $b\neq 0$.
	\item $f(a\vee b)\leq f(a)+f(b)$ for any $a,b\in L$. That is, $f$ is sub-additive.
	\item For all $a \in L$, 
		$$f(a)=\sup_{b\in \Lfin(L)}\{\delta(a\vee b)- f(b)\}.$$
	\end{enumerate}
\end{prop}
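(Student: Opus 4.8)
The plan is to obtain all five items from the characterisation in the preceding proposition: for $f\in T_L$ one has $f(a)=\sup_{B\in\Pf(L)}\big(\delta(a\vee\bigvee B)-\sum_{b\in B}f(b)\big)$ for every $a\in L$. Writing $\Phi(a,B):=\delta(a\vee\bigvee B)-\sum_{b\in B}f(b)$, the only general facts I would use are that $\Phi(a,B)\le f(a)$ for every finite $B\subseteq L$, that $f\in P_L$, and the monotonicity and triangle inequality for $\delta$ established earlier.

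Item 1 follows by taking $B=\emptyset$: then $\bigvee B=0$, so $\Phi(a,\emptyset)=\delta(a)$ and hence $f(a)\ge\delta(a)$. For item 2, if $a\le b$ then $a\vee\bigvee B\le b\vee\bigvee B$ for every $B$, so monotonicity of $\delta$ gives $\Phi(a,B)\le\Phi(b,B)\le f(b)$, and taking the supremum over $B$ yields $f(a)\le f(b)$.

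For item 3 I would fix $B$ and apply the triangle inequality for $\delta$ with middle element $b\ne 0$ to get
\[
\delta\!\left(a\vee c\vee\bigvee B\right)\ \le\ \delta(a\vee b)+\delta\!\left(b\vee c\vee\bigvee B\right);
\]
subtracting $\sum_{b'\in B}f(b')$ and recognising the second right-hand term as $\Phi(b\vee c,B)\le f(b\vee c)$ gives $\Phi(a\vee c,B)\le\delta(a\vee b)+f(b\vee c)$, and the supremum over $B$ finishes this item. For item 4, the set $\{a,b\}\cup B$ is a finite subset of $L$, so the defining inequality (\ref{eq:defn_P}) for $f\in P_L$ together with $f\ge 0$ gives $f(a)+f(b)+\sum_{b'\in B}f(b')\ge\delta(a\vee b\vee\bigvee B)$, that is, $\Phi(a\vee b,B)\le f(a)+f(b)$; the supremum over $B$ then gives subadditivity $f(a\vee b)\le f(a)+f(b)$.

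For item 5, iterating item 4 gives $\sum_{b\in B}f(b)\ge f(\bigvee B)$ for every finite $B$, hence $\Phi(a,B)\le\delta(a\vee\bigvee B)-f(\bigvee B)$, and taking suprema shows $f(a)\le\sup_{b\in\Lfin(L)}\{\delta(a\vee b)-f(b)\}$, the joins $\bigvee B$ all lying in $\Lfin(L)$; the reverse inequality is the single-element case $B=\{b\}$, since $\delta(a\vee b)-f(b)=\Phi(a,\{b\})\le f(a)$. I do not expect a serious obstacle here, as the whole proof is a sequence of applications of the sup-characterisation. The two places needing a little care are item 4, where passing from the list $a,b$ together with $B$ to the set $\{a,b\}\cup B$ can only decrease the sum and is harmless because $f\ge 0$, and item 5, where one must make sure the single-element supremum is taken over exactly the index set $\Lfin(L)$ appearing in the statement.
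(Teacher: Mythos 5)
Your argument is correct and is essentially the route the paper intends: the paper simply defers to the techniques of Proposition 2.4 of Bryant--Tupper, which are exactly these manipulations of the sup-characterisation $f(a)=\sup_B\{\delta(a\vee\bigvee B)-\sum_{b\in B}f(b)\}$ together with the definitional inequality for $P_L$ and the triangle inequality for $\delta$. Your reading of $\Lfin(L)$ as the elements of $L$ arising as finite joins (so that both directions of item 5 go through) is the sensible interpretation of the paper's undefined notation.
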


Again the proof follows by the exact same techniques as Proposition 2.4 in \cite{Bryant2012}.

For $x\in L$, let $h_x:L\rightarrow \Re_{\geq 0}$ be the function given by
	$$h_x(a)=\delta\left( x\vee a\right).$$
We define the map $\kappa:L\rightarrow P_L$ given by $\kappa(x)= h_x$. In the following theorem we prove that, indeed, $h_x\in P_L$.
	
\begin{thm}
Let $(L,\delta)$ be a lattice diversity.
Denote by $\leq$ the order on $L$ defined by $\vee$. The function $\kappa:(L,\leq)\rightarrow \left(P_L \cup \{\delta\},\preceq\right)$ defined above is an order-preserving map. Furthermore, 
	\begin{enumerate}
	\item $\kappa(0)=\delta$.
	\item $\kappa(a)\in P_L$ if $a\neq 0$.
	\item $\kappa(a)\in T_L$ if $a\in A(L)$.
	\item $\kappa$ restricted to $A(L)\cup\{0\}$ is injective.
	\item Recall that $\delta$ is strictly monotone iff $a<b$ implies $\delta(a)<\delta(b)$. If $\delta$ is strictly monotone, then $\kappa$ is injective.
	\end{enumerate}
\end{thm}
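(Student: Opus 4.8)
The plan is to verify the five enumerated claims in order, after first checking that $\kappa$ is order-preserving. For order-preservation: if $a \leq b$ then for every $c \in L$ we have $a \vee c \leq b \vee c$, so $\delta(a \vee c) \leq \delta(b \vee c)$ by monotonicity of $\delta$, i.e.\ $h_a(c) \leq h_b(c)$, which is exactly $\kappa(a) \preceq \kappa(b)$. For claim (1), $\kappa(0)(a) = \delta(0 \vee a) = \delta(a)$, so $\kappa(0) = \delta$. For claim (2), to show $h_a \in P_L$ when $a \neq 0$, take any finite $B \subseteq L$; we must show $\sum_{b \in B} \delta(a \vee b) \geq \delta(a \vee \bigvee B)$. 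Since $a \neq 0$, each $a \vee b$ contains $a$, so $\bigwedge_{b \in B}(a \vee b) \geq a > 0$, and $\bigvee_{b \in B}(a \vee b) = a \vee \bigvee B$; now apply Proposition~\ref{prop:general_subadditivity} to the family $\{a \vee b : b \in B\}$. (One should handle $B = \emptyset$ separately: then the left side is $0$ and the right side is $\delta(a \vee \bigvee \emptyset) = \delta(a \vee 0) = \delta(a)$, so we instead need $h_a(\emptyset\text{-empty-join})$ --- but in Definition~\ref{def:tight_span} the defining inequality ranges over finite $B$, and for $B = \emptyset$ it reads $0 \geq \delta(0) = 0$, which holds; so the empty case is fine.)

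For claim (3), suppose $a \in A(L)$; we must show $h_a$ is minimal in $P_L$. Using the characterization in the proposition following Proposition~\ref{prop:zerolemma}, $h_a \in T_L$ iff for every $x \in L$, $h_a(x) = \sup_{B}\{\delta(x \vee a \vee \bigvee B) - \sum_{b\in B} h_a(b)\}$. The choice $B = \emptyset$ already gives the value $\delta(x \vee a) = h_a(x)$, so the supremum is $\geq h_a(x)$; the reverse inequality is precisely the statement $h_a \in P_L$ applied to $B \cup \{x\}$, i.e.\ $\delta(x \vee a \vee \bigvee B) \leq h_a(x) + \sum_{b \in B} h_a(b)$, which follows from claim (2). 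Hence $h_a \in T_L$. (Note this argument actually shows $\kappa(a) \in T_L$ for \emph{any} $a \neq 0$ with $h_a \in P_L$, but the intended statement only asserts it for atoms, matching the metric case; I would present it for atoms as stated, possibly remarking on the generality.)

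For claim (4), suppose $a, b \in A(L) \cup \{0\}$ with $\kappa(a) = \kappa(b)$. Evaluating at $0$: $\delta(a) = \delta(a \vee 0) = h_a(0) = h_b(0) = \delta(b)$. If $a = 0$ then $\kappa(a) = \delta$; evaluating $\kappa(b) = \delta$ at $b$ gives $\delta(b \vee b) = \delta(b)$, no contradiction directly, so instead use: $h_b(c) = \delta(c)$ for all $c$ forces $\delta(b \vee c) = \delta(c)$; taking $c$ to be an atom distinct from $b$ (if one exists) gives $\delta(b \vee c) = 0$, so $b \vee c$ is $0$ or an atom, forcing $b = c$ or $b \leq$ that atom --- cleaner is: if $b \in A(L)$ is an atom, pick $c = b$, then $h_b(c) = \delta(b) = 0 = \delta(c)$, consistent; the real point is that $h_0 = \delta$ and $h_b = \delta$ with $b$ an atom would require $\delta(b \vee c) = \delta(c)$ for all $c$, and choosing $c$ with $b \not\leq c$, $b \vee c > c$; if $\delta$ separates here we are done, but $\delta$ need not be strictly monotone, so instead argue directly: for $a,b$ both atoms, $h_a = h_b$ gives $\delta(a \vee x) = \delta(b \vee x)$ for all $x$; this is the metric-embedding argument $d_\delta(a,x) = d_\delta(b,x)$ on $A(L)$, and taking $x = a$ gives $0 = d_\delta(b,a)$, hence $a = b$ by the metric theorem. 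For the mixed case $a = 0$, $b \in A(L)$: then $\delta = h_0 = h_b$, so $\delta(x) = \delta(b \vee x)$ for all $x$; take $x = b$: fine; the contradiction comes from monotonicity combined with the diversity axioms only if there is some $x$ with $b \vee x \neq x$ --- and indeed if $L = \{0\} \cup A(L)$ with a single atom this could genuinely fail, so the claim implicitly assumes $|A(L)| \geq 2$ or I should note the map is trivially injective on a one-element-plus-atoms lattice. I would state this carefully, most likely: $\kappa$ restricted to $A(L) \cup \{0\}$ is injective because $\kappa(a) = \kappa(b)$ implies $d_\delta(a, c) = d_\delta(b, c)$ for all atoms $c$ and $\delta(a) = \delta(b)$, and the metric space structure on $A(L)$ (plus handling of $0$ via $\delta(0) = 0$) separates the points.

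For claim (5), assume $\delta$ is strictly monotone. Suppose $\kappa(a) = \kappa(b)$, i.e.\ $\delta(a \vee c) = \delta(b \vee c)$ for all $c \in L$. Put $c = a$: $\delta(a) = \delta(a \vee a) = \delta(b \vee a)$. By monotonicity $a \leq a \vee b$, so $\delta(a) \leq \delta(a \vee b) = \delta(a)$, forcing $\delta(a) = \delta(a \vee b)$; if $a < a \vee b$ then strict monotonicity gives $\delta(a) < \delta(a \vee b)$, a contradiction (unless $a = 0$, since strict monotonicity is only asserted for nonzero elements --- here one handles $a = 0$ or $b = 0$ separately using $\delta$-values of atoms as above), so $a = a \vee b$, i.e.\ $b \leq a$. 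Symmetrically $a \leq b$, hence $a = b$. The main obstacle throughout is the boundary bookkeeping around atoms and the element $0$: strict monotonicity is only postulated for nonzero arguments, and $\kappa(0) = \delta$ lands in $P_L \cup \{\delta\}$ rather than $P_L$, so each injectivity argument needs a small separate case analysis for when one of the elements is $0$ or an atom, and claim (4) tacitly needs $A(L) \neq \emptyset$ (or the one-point degenerate case treated trivially) --- these cases are routine but must be stated to make the proof airtight.
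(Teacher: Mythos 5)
Most of your proof coincides with the paper's: order-preservation and items (1) and (2) are argued exactly as in the paper (the paper also deduces $\sum_{b\in B}\delta(a\vee b)\geq\delta(\bigvee B)$ from Proposition~\ref{prop:general_subadditivity} via $\bigwedge_{b\in B}(a\vee b)\geq a\neq 0$; your strengthening of the target to $\delta(a\vee\bigvee B)$ is harmless), and your items (4) and (5) come down to the same evaluations the paper uses ($h_a(a)=0$ versus $h_b(a)=\delta(a\vee b)>0$ for distinct atoms; $\delta(a)=\delta(a\vee b)$ plus strict monotonicity forcing $b\leq a$, and symmetrically). Your worries about the $0$-versus-single-atom boundary in (4) and (5) are legitimate rather than a defect: the paper's own proof of (4) only treats two distinct atoms, and its proof of (5) silently skips $a=0$ with $b$ an atom (a chain $0<a<c$ shows $\kappa(0)=\kappa(a)$ can occur), so flagging this is to your credit.

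The genuine gap is in item (3). You quote the characterization of $T_L$ as $h_a(x)=\sup_B\{\delta(x\vee a\vee\bigvee B)-\sum_{b\in B}h_a(b)\}$, but the proposition following Proposition~\ref{prop:zerolemma} has $\delta(x\vee\bigvee B)$, with no extra ``$\vee\, a$''. Under the correct statement your choice $B=\emptyset$ only produces $\delta(x)$, which is in general strictly smaller than $h_a(x)=\delta(x\vee a)$, so the lower bound on the supremum is not established; tellingly, your argument never uses that $a$ is an atom. That it cannot be right is exposed by your own parenthetical claim that it yields $\kappa(a)\in T_L$ for every $a\neq 0$: this contradicts Proposition~\ref{prop:zerolemma}, since $h_a(0)=\delta(a)>0$ whenever $a\notin A(L)\cup\{0\}$, and concretely in the paper's $\mathbf{M}_3$ example $\kappa(a_4)=(\alpha,\alpha,\alpha,\alpha,\alpha)\notin T_L$. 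The repair is small and is exactly where atomicity enters: in the correct characterization take $B=\{a\}$, so the supremum is at least $\delta(x\vee a)-h_a(a)=h_a(x)$ because $h_a(a)=\delta(a)=0$ for an atom; the upper bound is, as you say, $h_a\in P_L$ applied to $B\cup\{x\}$. Alternatively, the paper argues minimality directly: if $g\in P_L$ with $g\preceq h_a$, then $g(a)\leq h_a(a)=0$, and the defining inequality of $P_L$ applied to $\{x,a\}$ gives $g(x)=g(x)+g(a)\geq\delta(x\vee a)=h_a(x)$, so $g=h_a$.
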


\begin{proof}
Let $a,b\in L$ such that $a\leq b$. Then, for any $x\in L$, $h_a(x)=\delta(a\vee x)\leq \delta(b\vee x)=h_b(x)$ and hence $h_a\preceq h_b$. This verifies that $\kappa$ is an order-preserving mapping. Now, we prove each statement:

\begin{enumerate}
\item For any $x\in L$, $h_0(x)=\delta(x)$, therefore $\kappa(0)=\delta$.
\item Let $a\in L\setminus\{0\}$ and $B\in\Pf(L)$.  Then
	$$\sum_{b\in B}h_a(b)=\sum_{b\in B}\delta(a\vee b)\geq \delta\left(\bigvee B\right).$$
	This follows from Proposition \ref{prop:general_subadditivity} and the fact that, if we write $B=\{b_1,...,b_n\},$  
	$$(a\vee b_1)\wedge ... \wedge (a\vee b_n) \geq a \neq 0.$$ 
	Therefore $\kappa(a)\in P_L$.
\item Let $a\in A(L)$ and let $g\in P_L$ be a function such that $g\preceq h_a$. Since $h_a(a)=0$, we have $g(a)=0$. Then, for any $x\in L$, 
	$$h_a(x)=\delta\left( x \vee a\right)\leq g\left( x\right) + g(a)= g \left( x \right).$$
So $h_a =g$ and therefore, $h_a$ is minimal in $P_L$. 

\item Let $a,b\in A(L)$ such that $a\neq b$. Then, $h_a(a)=0$, whereas $h_b(a)=\delta(a\vee b)> 0$. This means that $\kappa(a)\neq \kappa(b)$ and therefore, $\kappa$ is injective in $A(L)$.

\item Suppose that $\delta$ is strictly monotone, and let $a,b \in L$ such that $a\neq b$. If $a<b$, then $a<a\vee b=b$ and then, $h_b(a)=\delta(a\vee b)=\delta(b)>\delta(a)=h_a(a)$. The case $b<a$ is analogous. If $a$ and $b$ are not comparable, then $a\vee b > a$ and then $h_b(a)=\delta(a\vee b)>\delta(a)=h_a(a)$. We conclude that $\kappa$ is injective.
\end{enumerate}
  \end{proof}

For classical diversities \cite{Bryant2012}, the analogue of the map $\kappa$ is an embedding of the original diversity $(X,\delta)$ into its tight span, which is itself a diversity. The analogous property for lattice diversities would be if $\kappa$ were a lattice homomorphism.
Under the natural ordering of $P_L \cup \{\delta\}$, the join of two elements is the maximum and the meet of two elements is the minimum. So we would have that $h_{a \vee b}=\max(h_a,h_b)$ and $h_{a \wedge b} = \min(h_a,h_b)$.
 However, the following example shows that this is not the case in general.
Let $L$ be the set of all finite subsets of $\{a,b,c\}$. Define $\delta$ to take value $1$ on all two-element sets and $\delta(\{a,b,c\})=3/2$.  Then $(L,\delta)$ is a lattice diversity, but
$h_{\{a,b\}}(c) = \delta(\{a,b,c\})=3/2$ and 
$h_{\{a\}}(c)=h_{\{b\}}(c)=1$, 
so $h_{\{a\} \vee \{b\}} (c) \neq \max(h_{\{a\}}(c),h_{\{b\}}(c))$.

Now we give some examples of the construction of the tight span for lattice diversities. 

\begin{example}
If $(X,\delta)$ is a classical diversity, then the lattice diversity $(\Pf(X),\delta)$ has the same tight span as $(X,\delta)$ does in the theory in 
\cite{Bryant2012}. 
\end{example}

\begin{example}
Let $L$ be $\mathbf{M}_3$ the lattice given by the Hasse diagram:
\begin{figure}[H]
\centering
\includegraphics[height=30mm]{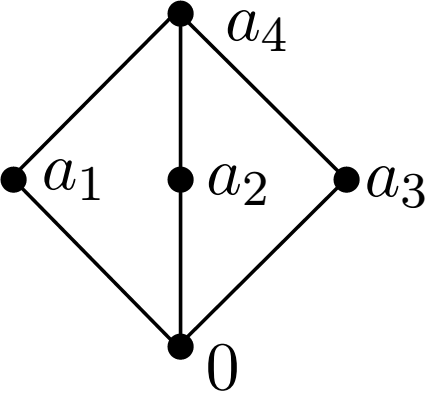}
\end{figure}
$\mathbf{M}_3$ is a modular but non-distributive lattice.

Any diversity function $\delta\colon L\rightarrow \Re_{\geq 0}$ is determined by the value $\alpha=\delta(a_4)$, since we have $\delta(0)=\delta(a_1)=\delta(a_2)=\delta(a_3)=0$. 
Let $f \colon L \rightarrow \mathbb{R}^+$.
$f$ is determined by the values $f_i=f(a_i)$ and $f_0=0$.
Proposition~\ref{prop:basic_properties_TP} implies that $f \in T_L$
 iff	
	$$\begin{array}{lll}
	f_0,f_1,f_2,f_3\geq 0, & f_4\geq \alpha, & f_1+f_2\geq \alpha,\\
	f_1+f_3\geq \alpha, & f_2+f_3\geq \alpha, & 
	\end{array}$$
and for each $f_i$ at least one of the inequalities it is in holds as an equality. 

Let $f \in T_L$. We immediately have $f_0=0$ and $f_4=\alpha$.
We do a case analysis depending on which inequalities hold as equalities. 
First if $f_1=0$, the remaining inequalities are $f_2 \geq \alpha$, $f_3 \geq \alpha$ and $f_2+f_3 \geq \alpha$. The first two imply that the third is strict and so the first two must hold as equalities. by looking at the remaining inequalities we have $f_2=f_3=\alpha$.
This gives the point $f=v_1=(0,0,\alpha,\alpha,\alpha)$.
 Likewise, if $f_2=0$ we have $f_1=f_3=\alpha$ and if $f_3=0$ we have $f_1=f_2=\alpha$. This gives us the points $f=v_2=(0,\alpha,0,\alpha,\alpha)$ and $f=v_1=(0,\alpha,\alpha,0,\alpha)$.
  These three points can be checked to satisfy the conditions for being in $T_L$. So now suppose $f_1,f_2,f_3 >0$. Then at least two out of the three inequalities $f_1+f_2\geq \alpha$, $f_1+f_3\geq \alpha$, $f_2+f_3\geq \alpha$ must hold as equalities. Suppose is is the first two: letting $f_1=x$ we get a set of points $f=(0,x,\alpha-x,\alpha-x,\alpha)$ which can be checked to be in the tight span if $x \in [0,\alpha/2]$. By choosing the other two pairs of inequalities we get further sets of points  $f=(0,\alpha-y,y,\alpha-y,\alpha)$ for $y \in [0,\alpha/2]$, and $f=(0,\alpha-z,\alpha-z,z,\alpha)$ for $z \in [0,\alpha/2]$, all of which can be checked to be in the tight span.
  
  All of this implies that 
  \begin{align*}
	T_L=&\{(0,f_1,\alpha-f_1,\alpha-f_1,\alpha)\mid 0\leq f_1\leq \alpha/2 \}\cup \\
		&\{(0,\alpha-f_2,f_2,\alpha-f_2,\alpha)\mid 0\leq f_2\leq \alpha/2 \} \cup \\
		&\{(0,\alpha-f_3,\alpha-f_3,f_3,\alpha)\mid 0\leq f_3\leq \alpha/2 \}
	\end{align*}	
	which we depict a projection of in the following figure.
\begin{figure}[H]
\centering
\includegraphics[height=30mm]{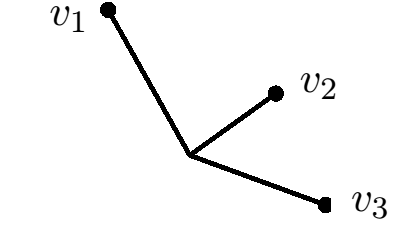}
\end{figure}

%
%
%

Using the definition of $\kappa$ we can determine the functions $\kappa(a_i)$:
	$$\begin{array}{ll} 
	\kappa(0) = (0,0,0,0,\alpha) & \kappa(a_1) = (0,0,\alpha,\alpha,\alpha) \\
	\kappa(a_2) = (0,\alpha,0,\alpha,\alpha) & \kappa(a_3) = (0,\alpha,\alpha,0,\alpha)\\
	\kappa(a_4) = (\alpha,\alpha,\alpha,\alpha,\alpha) & \\
	\end{array}$$
As we can see, $\kappa(a_i)$ corresponds to the point $v_i$ and therefore, the elements $a_1,a_2,a_3\in A(S)$ are mapped injectively to $v_1,v_2,v_3$ respectively. Also observe that the tight span has the same structure as the tight span of the induced metric on the atoms.
\end{example}

\begin{example}
Let $L$ be $\mathbf{N}_5$, the lattice given by the Hasse diagram
\begin{figure}[H]
\centering
\includegraphics[height=27mm]{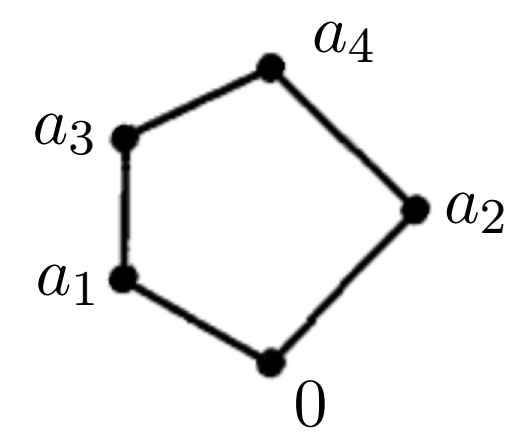}
\end{figure}

$\mathbf{N}_5$ is the simplest example of a non-modular lattice, and is hence non-distributive. Any diversity $\delta\colon L\rightarrow \Re_{\geq 0}$ is determined by the two values
	$$\alpha=\delta(a_3),\;\; \beta=\delta(a_4),$$
where $0<\alpha\leq \beta$. Let $f\colon L\rightarrow\Re_{\geq 0}$ be given by $f_i:=f(a_i)$. In order for $f\in P_L$, $f$ must satisfy condition (\ref{eq:defn_P}), which translates to the following inequalities:
	\begin{equation}\label{eq:example_tightspan_ineq}
	\begin{array}{lll}
	f_1,f_2\geq 0, & f_3\geq \alpha, & f_4\geq \beta, \\
	f_1+f_2\geq \beta, & f_2+f_3\geq \beta. &  \\
	\end{array}
	\end{equation}
By definition, it holds that $f\in T_L$ if and only if $f$ satisfies the inequalities (\ref{eq:example_tightspan_ineq}) and each $f_i$ is in one inequality that holds as an equality. It follows immediately that $f_4=\beta$.\\

Let $f \in T_L$.
Then $f_1,f_2\geq 0$ and $f_1+f_2\geq \beta$. If $f_1+f_2>\beta$, then $f_1=0$ (since $f_1 \geq 0$  is the only remaining inequality $f_1$ appears in) and then, $f_2>\beta$. Therefore, $f_2+f_3=\beta$, which contradicts $f_2>\beta$. So we must have $f_1+f_2=\beta$ for all $f\in T_L$.  Let $x=f_1$ and so $f_2=\beta-x$, which yields $f_3=\max(\alpha,x)$. Also, since $f_2\geq 0$, we have $f_1\leq \beta$. This gives the points $f=(0,x,\beta-x,\max(\alpha,x))$ for $x \in [0,\beta]$ as the tight span of $L$.

Denote $v_1=(0,0,\beta,\alpha,\beta)$, $v_2=(\beta,0,\beta)$ and $p=(\alpha,\beta-\alpha,\alpha)$.  Then $T_L$ is given by the union of segments $\overline{v_1,p}$ and $\overline{p,v_2}$, as depicted in the following figure:
\begin{figure}[H]
\centering
\includegraphics[height=23mm]{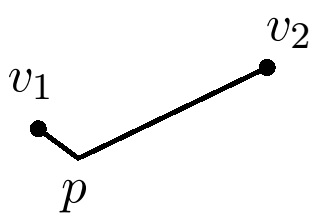}
\end{figure}

The functions $\kappa(a_i)$ are given by

	$$\begin{array}{ll} 
	\kappa(0) = (0,0,0,\alpha,\beta) & \kappa(a_1) = (0,0,\beta,\alpha,\beta) \\
	\kappa(a_2) = (0,\beta,0,\beta,\beta) & \kappa(a_3) = (\alpha,\alpha,\beta,\alpha,\beta)\\
	\kappa(a_4) = (\beta,\beta,\beta,\beta,\beta). & \\
	\end{array}$$

\end{example}

\section{Discussion} \label{sec:discussion}

Though many aspects of the theory of classical diversities introduced in \cite{Bryant2012} carry over directly to lattice diversities, we see that most of the theory of tight spans does not. As established in the previous section, the natural mapping from a lattice diversity to the set of real-valued functions on the lattice is not a lattice homomorphism. 
This motivates looking for other ways of defining the tight span of a lattice diversity.

In both the case of metrics \cite{dress1984} and classical diversities \cite{Bryant2012}, the tight span 
has a natural definition within the framework of category theory. If we view a metric space as an object in the concrete category {\bf{Met}} of metric spaces with non-expansive mappings, then the tight span of a metric space is its \emph{injective hull}, which is the unique essential embedding of the metric space into an injective object \cite{adamek2004}. The analogous result is true for classical diversities in the category {\bf{Div}} \cite{Bryant2012}. In general, for any concrete category (which we specify by its objects and morphisms) if all objects have injective hulls then the injective hulls are a good candidate for an analogue of the tight span. However, there are many interesting categories for which most objects do not have injective hulls. One example is the category of lattices {\bf{Lat}} where the only injective objects are single point lattices. This follows from the fact that any lattice with two or more points has arbitrarily large essential extensions \cite[Prop.\ 8]{bruns1968} and  that injective objects have no proper essential extension \cite[Prop.\ 9.15]{adamek2004}.

To pursue this avenue for lattice diversities, we start by defining a category where our objects are lattice diversities and define morphisms to be maps $f \colon (L_1,\delta_1) \rightarrow (L_2,\delta_2)$ between lattices that satisfy
\begin{enumerate}
\item $f$ is a lattice homomorphism,
\item $\delta_2(f(a))\leq \delta_1(a)$, for all $a \in L$.
\end{enumerate}
 Now a minimal requirement for objects in this category to have injective hulls is that each object be embeddable into some injective object. But what are the injective objects of this category? Unfortunately, they are the single point lattices with the trivial diversity on them.
 
 To see this,
 recall that an injective object $X$ in a concrete category is an object for which, if $A \stackrel{f}{\rightarrow} X$ is  a morphism and $A \stackrel{i}{\rightarrow} B$ is an embedding then there is a morphism $B \stackrel{g}{\rightarrow} X$ such that $g \circ i = f$.

\begin{prop}
In the concrete category of lattice diversities the only injective objects are single-point lattices with trivial diversities.
\end{prop}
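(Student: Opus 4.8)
The plan is to leverage the fact, cited in the discussion, that the only injective objects in the category \textbf{Lat} of lattices are single-point lattices, and to show the same obstruction survives when we add the diversity data. First I would recall the precise definition of injectivity in this concrete category: an object $(X,\delta_X)$ is injective if for every morphism $f \colon (A,\delta_A) \rightarrow (X,\delta_X)$ and every embedding $i \colon (A,\delta_A) \hookrightarrow (B,\delta_B)$, there is a morphism $g \colon (B,\delta_B) \rightarrow (X,\delta_X)$ with $g \circ i = f$. Here an embedding is a morphism which is injective and a lattice embedding (and, since morphisms are non-expansive, one should check whether the relevant diagrams actually use diversity-preserving embeddings or merely lattice embeddings — I would adopt the same convention as earlier in the paper, where $\kappa$ restricted to $A(L)\cup\{0\}$ is the relevant embedding).

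The key step is that the forgetful functor to \textbf{Lat} reflects injectivity in the following sense. Suppose $(X,\delta_X)$ is an injective lattice diversity. Given any lattice embedding $i \colon A \hookrightarrow B$ and any lattice homomorphism $f \colon A \rightarrow X$, I want to produce a lattice homomorphism $g \colon B \rightarrow X$ extending $f$; this would make $X$ an injective object of \textbf{Lat}, forcing $|X| = 1$ by \cite[Prop.\ 8]{bruns1968} and \cite[Prop.\ 9.15]{adamek2004}. The trick is to equip $A$ and $B$ with diversity functions making $i$ and $f$ into morphisms of lattice diversities. On $B$, use the universal ``largest'' diversity — the one from the first theorem of Section~\ref{sec:always_diversities}, namely $\delta_B(b) = 0$ if $b \in A(B) \cup \{0\}$ and $1$ otherwise — so that the non-expansiveness condition $\delta_X(f(a)) \le \delta_A(a)$ is easiest to satisfy. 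On $A$, one needs a diversity $\delta_A$ large enough that both $i$ (into $(B,\delta_B)$) and $f$ (into $(X,\delta_X)$) are non-expansive: take $\delta_A(a) = \delta_B(i(a))$, i.e.\ pull back the diversity from $B$; then $i$ is automatically a morphism, and since $\delta_B$ only takes values $0$ and $1$ with $\delta_B = 0$ on atoms and $0$, one checks $\delta_X(f(a)) \le \delta_A(a)$ holds (the only constraint is when $\delta_A(a) = 0$, i.e.\ $i(a) \in A(B)\cup\{0\}$, and one must verify $f(a)$ then has $\delta_X$-value $0$, which should follow because lattice homomorphisms send $0$ to $0$ and send atoms to atoms-or-$0$ — or, if this fails, one inflates $\delta_X$ correspondingly, noting $X$ being injective constrains what $\delta_X$ can be). Then injectivity of $(X,\delta_X)$ gives the desired $g$, which is in particular a lattice homomorphism.

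I expect the main obstacle to be the bookkeeping in the previous paragraph: making sure the pulled-back or inflated diversity on $A$ is genuinely a valid lattice diversity (condition 1 of Definition~\ref{defn:basic} in particular — that it vanishes exactly on atoms and $0$), and that $f$ is non-expansive for it. The subtlety is that $i(a)$ could be an atom of $B$ while $a$ is not an atom of $A$ (an embedding need not preserve the atom structure), so the naive pullback $\delta_A(a) = \delta_B(i(a))$ may violate axiom 1; the fix is to take $\delta_A(a) = \max(\delta_B(i(a)), [\text{$a \notin A(A)\cup\{0\}$}])$ or, more cleanly, to just use the canonical ``always a diversity'' function on $A$ directly and separately check both $i$ and $f$ are non-expansive — this works because that function dominates every diversity with values in $\{0,1\}$ on non-atoms and the codomain diversities are arbitrary non-negative, so one may first rescale. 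Once the diversities are set up so that $i$ and $f$ are morphisms, the conclusion is immediate from injectivity in \textbf{Lat}. Finally, I would note the converse direction is trivial: a single-point lattice with the trivial (identically zero) diversity is injective because there is a unique morphism into it from any object, so the extension always exists.
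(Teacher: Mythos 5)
There is a genuine gap at the heart of your reduction. You want to show that injectivity of $(X,\delta_X)$ in the lattice-diversity category forces $X$ to be injective in $\categ{Lat}$, by equipping an \emph{arbitrary} lattice embedding $i\colon A\hookrightarrow B$ and an \emph{arbitrary} lattice homomorphism $f\colon A\to X$ with diversities so that both become morphisms. This cannot work in general: axiom 1 of Definition~\ref{defn:basic} forces any admissible $\delta_A$ to vanish exactly on $A(A)\cup\{0\}$, so non-expansiveness of $f$ requires $\delta_X(f(a))=0$, i.e.\ $f(a)\in A(X)\cup\{0\}$, for every atom $a$ of $A$ and for $0_A$. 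Your justification --- that lattice homomorphisms send $0$ to $0$ and atoms to atoms-or-$0$ --- is false (take $A$ the two-element chain $0<a$ and $f$ sending $a$ to any non-atom of $X$ with $\delta_X(f(a))>0$: no choice of $\delta_A$ whatsoever makes $f$ non-expansive, and rescaling does not help since the offending value $\delta_A(a)$ is pinned to $0$). The fallback of ``inflating $\delta_X$'' is not available either: $\delta_X$ is part of the fixed object whose injectivity is being tested, and axiom 1 leaves no freedom on where it vanishes. So injectivity in the lattice-diversity category only entails solvability of the strictly smaller class of lifting problems whose $f$ is already non-expansive, and the conclusion ``$X$ is injective in $\categ{Lat}$, hence a single point'' does not follow by this transfer. (The converse direction, that the one-point lattice with trivial diversity is injective, is fine.)

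The paper avoids this by running the Bruns-style obstruction \emph{inside} the lattice-diversity category with a lifting problem that is automatically admissible: take $A=X$, $f=\mathrm{id}$ (trivially non-expansive), and build a specific extension $B$ of $X$ by adjoining three pairwise incomparable new atoms strictly between $0$ and a fixed nonzero $e\in X$, extending $\delta$ by $0$ on the new atoms so that $(B,\delta)$ is a lattice diversity; one then checks directly that no lattice homomorphism $B\to X$ restricts to the identity on $X$. If you want to use the known failure of injectivity in $\categ{Lat}$, you must likewise exhibit a concrete essential-type extension carrying a legitimate diversity and a morphism that is genuinely non-expansive, rather than transferring arbitrary $\categ{Lat}$ lifting problems.
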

\begin{proof} The proof follows nearly exactly the proof that there are no injective lattices with more than one point; see, for example, \cite[Prop.\ 8]{bruns1968}.
Let $(X,\delta)$ be an injective lattice diversity where $X$ is bounded and has two distinct points. 

$X$ has a 0 and at least one other point we denote as $e$. We can write $X$ as consisting of $0$, $e$, and three sets of points: $A_{(0,e)}$, which is all points between $0$ and $e$, $A^*$ which is all points not comparable to $e$, and $A_{(e,\infty)}$ which is all points strictly greater than $e$.

  Let $A = X$ and $f$ be the identity map, so that $(A,\delta)$ is also a lattice diversity.

Now we let $B$ be $A$ taken together with three new elements $a,b,c$. 
We define $a,b,c$ to be strictly greater than 0, strictly less than $e$, and not comparable to any points in $A_{(0,e)}$ or $A^*$ or to each other.
Since $a,b,c$ are atoms, we extend $\delta$ on $a,b,c$ to be 0, which makes $(B,\delta)$ a lattice diversity.  We claim that there is no lattice homomorphism from $B$ to $X$ that is the identity on $X=A$. This then implies that there is no 
morphism from $(B,\delta)$ to $(X,\delta)$ that is the identity on $A=X$.

To define a homomorphism from $B$ to $X$ that extends the identity, we need to specify $g(a), g(b), g(c)$. Starting with $a$, since $a \leq e$, $g(a)\leq g(e)=e$ and so $g(a) \in A_{(0,e)}$.
Suppose $g(a)=x \in A_{(0,e)}$. Then $g(x) \vee g(a) = x \vee x = x$, but $a \vee x= 1$. So $1= g(x \vee a) \neq g(x) \vee g(a)$. Hence each of $g(a), g(b), g(c)$ is either $0$ or $1$, and two of them must get mapped to the same one. So suppose $g(a)=g(b)=1$. This is a problem because $a\wedge b= 0$ but $g(a) \wedge g(b)=1$. Likewise if $g(a)=g(b)=0$. So no such homomorphism exists. Therefore $(X,\delta)$ is not injective.
%
  \end{proof}

Of course, there are other families of mappings between lattice diversities that we could define to be our morphisms, but we will encounter the same obstruction whenever the morphisms are required to be lattice homomorphisms.
   
 One further possibility is to consider restricted classes of lattice diversities, such distributive lattice diversities. The injective objects in the category of distributive lattices are the complete Boolean lattices \cite{bruns1968}, and so we do not meet the same obstruction that we do with general lattices. But we conjecture that the resulting theory will recapitulate that of the tight span theory for diversities, since Boolean lattices are isomorphic to the subalgebra of a power set by Stone's representation theorem \cite{davey2002introduction}.

\section*{Acknowledgements}
PT was supported by an NSERC Discovery Grant.
RF thanks CONACYT Project 45886 for support. MT thanks  CONACYT grant 223676 for support.

%
%

\bibliographystyle{plain}
\bibliography{LatticeDiversities}   

\end{document}